\documentclass[11pt,a4paper,reqno]{amsart}

\usepackage{a4wide}
\usepackage[T1]{fontenc}
\usepackage{lmodern}
\usepackage{amsmath,amssymb,amsthm}
\usepackage{url}
\usepackage[colorlinks=true,linkcolor=blue,citecolor=blue,urlcolor=blue]{hyperref}
\usepackage[nameinlink,noabbrev]{cleveref}
\usepackage{aliascnt}

\theoremstyle{plain}
\newtheorem{maintheorem}{Theorem}

\newaliascnt{maincorollary}{maintheorem}
\newtheorem{maincorollary}[maincorollary]{Corollary}
\aliascntresetthe{maincorollary}
\newtheorem{theorem}{Theorem}[section]

\newaliascnt{lemma}{theorem}
\newtheorem{lemma}[lemma]{Lemma}
\aliascntresetthe{lemma}

\newaliascnt{proposition}{theorem}
\newtheorem{proposition}[proposition]{Proposition}
\aliascntresetthe{proposition}

\newaliascnt{corollary}{theorem}

\aliascntresetthe{corollary}

\theoremstyle{definition}

\theoremstyle{remark}

\crefname{maintheorem}{theorem}{theorems}
\Crefname{maintheorem}{Theorem}{Theorems}
\crefname{maincorollary}{corollary}{corollaries}
\Crefname{maincorollary}{Corollary}{Corollaries}

\newcommand{\R}{\mathbb R}
\newcommand{\C}{\mathbb C}
\newcommand{\Hh}{\mathbb H}
\newcommand{\N}{\mathbb N}
\newcommand{\F}{\mathbb F}
\newcommand{\norm}[1]{\lVert #1\rVert}
\newcommand{\abs}[1]{\lvert #1\rvert}
\newcommand{\inner}[2]{\langle #1,#2\rangle}
\newcommand{\cP}{\mathcal P}
\newcommand{\cE}{\mathcal E}
\newcommand{\Lip}{\operatorname{Lip}}
\newcommand{\Aut}{\operatorname{Aut}}
\newcommand{\End}{\operatorname{End}}
\newcommand{\Hom}{\operatorname{Hom}}
\newcommand{\Sym}{\operatorname{Sym}}
\newcommand{\vol}{\operatorname{vol}}
\newcommand{\Gr}{\operatorname{Gr}}
\newcommand{\Id}{\mathrm{Id}}

\subjclass[2020]{Primary 46C15, 52A21; Secondary 46A04, 46A13, 55R10, 55M25}
\keywords{Banach's isometric conjecture, Minkowski's functional, star body, quaternionic Banach space, metric Fr\'echet space, graded Fr\'echet space, principal bundle, Brouwer degree}

\begin{document}

\title[Banach's isometric conjecture]{Banach's Isometric Conjecture over the Complex Field}

\author[A. Acuaviva]{Antonio Acuaviva}
\address[A.~Acuaviva]{School of Mathematical Sciences, Lancaster University, Lancaster LA1 4YF, United Kingdom}
\email{ahacua@gmail.com}

\author[T. Kania]{Tomasz Kania}
\address[T.~Kania]{Mathematical Institute\\Czech Academy of Sciences\\\v Zitn\'a 25 \\115 67 Praha 1\\Czech Republic  and  Institute of Mathematics and Computer Science\\ Jagiellonian University\\ {\L}ojasiewicza 6, 30-348 Krak\'{o}w, Poland
}
\email{kania@math.cas.cz, tomasz.marcin.kania@gmail.com}
\thanks{RVO: 67985840.}

\date{}

\begin{abstract}
We complete Banach's isometric conjecture over the complex field. More precisely, if \(X\) is a complex normed space and, for some \(2\leqslant n<\dim_{\C}X\), all its \(n\)-dimensional complex subspaces are isometric as metric spaces, then the norm is induced by a Hermitian inner product. We also prove the quaternionic counterpart. The central geometric argument first treats real star bodies without convexity or central symmetry; applied to circled complex or quaternionic bodies, it shows that mutually real-linearly equivalent hyperplane sections force the ambient body to be a Hermitian ellipsoid. The proof adapts the bundle-degree mechanism introduced by Lu and Yang for the real case. Finally, we obtain extensions to absolutely homogeneous functions, graded Fr\'echet spaces, metrisable locally convex spaces, and compatible translation-invariant metrics.
\end{abstract}

\maketitle

\section{Introduction}\label{sec:introduction}

The question of how much the geometry of a normed space is determined by its subspaces of one fixed finite dimension goes back to Banach. He asked whether a real or complex Banach space \(X\) must be a Hilbert space whenever, for some fixed integer \(2\leqslant n<\dim X\), all its \(n\)-dimensional subspaces are linearly isometric \cite[Chapter~XII]{Banach1932}. This is Banach's isometric conjecture.

Dvoretzky's theorem settles the conjecture for every fixed \(n\geqslant2\) in infinite-dimensional real Banach spaces \cite{Dvoretzky1959}, and Milman's complex version gives the corresponding complex result \cite{Milman1971}. The finite-dimensional history is more intricate. Auerbach, Mazur, and Ulam proved the real case \(n=2\) in 1935 \cite{AuerbachMazurUlam1935}. Gromov proved the conjecture for every even \(n\) over \(\R\) and \(\C\). For odd \(n\), he also proved it over \(\R\) when \(\dim_{\R}X\geqslant n+2\), and over \(\C\) when \(\dim_{\C}X\geqslant2n\) \cite{Gromov1967}.

The bundle approach was subsequently developed by Bor, Hern\'andez-Lamoneda, Jim\'enez-Desantiago, and Montejano, who proved the real case \(n\equiv1\pmod4\), \(n\geqslant5\), with the possible exception of \(n=133\) \cite{BorEtAl2021}. Ivanov, Mamaev, and Nordskova settled the low-dimensional case \(n=3\), \(\dim_{\R}X=4\) \cite{IvanovMamaevNordskova2023}. Most recently, Lu and Yang proved the real conjecture for every odd \(n\), thereby completing the real problem \cite{LuYang2026}. Their paper is explicitly confined to real spaces. The bundle-degree mechanism used here is theirs, and their construction in turn has the work of Bor et al. as a direct antecedent.

Over \(\C\), Bracho and Montejano proved the cases \(n\equiv1\pmod4\) \cite{BrachoMontejano2021}. Consequently, before the present work the only residual complex range was
\[
 n\equiv3\pmod4,
 \qquad n<\dim_{\C}X<2n.
\]
The usual two-vector reduction shows that it is enough to consider \(\dim_{\C}X=n+1\): once every \((n+1)\)-dimensional subspace is Hermitian, the parallelogram identity holds for every pair of vectors. We prove precisely this codimension-one case and hence close the residual range. The quaternionic statement appears to have no antecedent in the literature known to us.

For completeness, Zhang's recent preprint claims a proof of the full finite-dimensional real problem by means of the John ellipsoid \cite{Zhang2025}. Its Theorem~3.1 states that an origin-symmetric star body with ellipsoidal hyperplane sections is an ellipsoid. This overlaps the star-body setting but not the implication proved here: our sections are assumed only to be mutually real-linearly equivalent, their ellipsoidality is part of the conclusion, and no central symmetry is imposed. We next establish the complex and quaternionic results considered here, together with several Fréchet-space extensions. To the best of our knowledge, these are not covered by previous results.

The result follows from a statement in which neither convexity nor the triangle inequality is assumed. Throughout the paper, \(\F\in\{\C,\Hh\}\). A function \(p\colon X\to[0,\infty)\) is \emph{definite} if \(p(x)=0\) only for \(x=0\), and it is \emph{\(\F\)-absolutely homogeneous} if \(p(xa)=p(x)\abs a\) for \(x\in X\) and \(a\in\F\).

\begin{maintheorem}\label{thm:functional-intro}
Let \(X\) be a right \(\F\)-vector space, let \(2\leqslant n<\dim_{\F}X\), and let \(p\colon X\to[0,\infty)\) be definite and \(\F\)-absolutely homogeneous. Suppose that
\begin{enumerate}
\renewcommand{\labelenumi}{\textup{(\roman{enumi})}}
\renewcommand{\theenumi}{(\roman{enumi})}

\item\label{it:1thA} on every \((n+1)\)-dimensional \(\F\)-subspace \(Y\) of \(X\), the restriction of \(p\) to an auxiliary Euclidean unit sphere is Lipschitz; and

\item\label{it:2thA} whenever \(E\) and \(F\) are \(n\)-dimensional \(\F\)-subspaces of \(X\), there is a real-linear bijection \(T\colon E\to F\) such that \(p(Tx)=p(x)\) for every \(x\in E\).
\end{enumerate}
Then there is a positive-definite \(\F\)-Hermitian form \(h\) on \(X\) such that \(p(x)^2=h(x,x)\) for every \(x\in X\).
\end{maintheorem}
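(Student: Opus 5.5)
We reduce first to the codimension-one case and then run a Lu--Yang-type bundle-degree argument on the unit body.

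\emph{Reduction.} A function \(p\) satisfies \(p^2=h(x,x)\) for a positive-definite \(\F\)-Hermitian \(h\) exactly when \(p^2\) satisfies the parallelogram identity and the polarised form is \(\F\)-sesquilinear. Both conditions involve only finitely many vectors at a time, namely two vectors together with scalars. Since \(n+1\leqslant\dim_\F X\), any two vectors lie in some \((n+1)\)-dimensional subspace \(Y\). Hypotheses \ref{it:1thA} and \ref{it:2thA} restrict to \(Y\), because the \(n\)-dimensional subspaces of \(Y\) are \(n\)-dimensional subspaces of \(X\). So it suffices to treat \(X=\F^{n+1}\).

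\emph{Set-up.} Put \(K=\{x:p(x)\leqslant1\}\). Hypothesis \ref{it:1thA} together with definiteness and homogeneity makes \(K\) a star body with Lipschitz radial function \(\rho=1/p\) on the Euclidean sphere. Since \(p(xa)=p(x)\abs{a}\), the body \(K\) is invariant under right multiplication by unit scalars of \(\F\). Every \(\F\)-hyperplane \(H\) has the form \(\xi^{\perp}\) for a unit \(\xi\), and \(K\cap H\) is a circled star body in \(H\cong\R^{2n}\) or \(\R^{4n}\). By \ref{it:2thA}, all these sections are \(\R\)-linearly equivalent to a fixed model section \(L\).

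The goal is to prove the real statement announced in the abstract: a star body, not necessarily convex or symmetric, all of whose hyperplane sections through the origin in a given family are real-linearly equivalent, must be an ellipsoid. We apply it to the family of \(\F\)-hyperplanes. Once \(K\) is known to be an ellipsoid, its invariance under \(\F\)-unit scalars forces the associated quadratic form to be \(\F\)-Hermitian. Indeed, averaging over the unit group gives an invariant Euclidean structure, and an ellipsoid invariant under the unit scalars is the unit ball of an invariant inner product.

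\emph{Main step.} Following Lu--Yang, consider the principal bundle over the \(\F\)-projective space \(\F P^{n}\) of \(\F\)-hyperplanes whose fibre over \(H\) is the set \(\{T\in \mathrm{GL}_\R(L,H): T(L)=K\cap H\}\). The structure group is the real-linear symmetry group \(G=\{g:gL=L\}\), a compact subgroup because \(L\) is a bounded star body. Lipschitz regularity of \(\rho\) is what makes this family vary continuously and yields a genuine locally trivial bundle: one can normalise \(T\) by the John-type position of a canonical ellipsoid attached to a star body, for example its minimal-volume or Legendre inertia ellipsoid, which depends continuously on \(\rho\) in \(L^\infty\).

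After this normalisation one may assume \(G\) lies in \(O(2n)\) or \(O(4n)\) and contains the unit scalars \(U(1)\) or \(Sp(1)\) acting on \(H\). The aim is to show that \(G\) acts transitively on the unit sphere of \(H\); then \(L\) is a Euclidean ball and each section of \(K\) is an ellipsoid.

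\emph{Main obstacle.} Proving transitivity of \(G\) is the step I expect to be hardest. The plan is to argue by contradiction. If \(G\) is not transitive, the normalised radial function of \(L\) is nonconstant. Use it to build a continuous \(G\)-equivariant map from \(S(H)\) to a representation sphere, which yields a nowhere-vanishing section of an associated bundle over \(\F P^n\) (for instance a section of the tautological bundle, or a subbundle splitting of \(T\F P^n\)). Then compute a Brouwer degree or Euler class to show that no such section exists. For \(\F P^n\) with \(n\geqslant2\), the top Chern or symplectic classes of these bundles are nonzero, which rules the section out regardless of the parity of \(n\). This explains why the complex and quaternionic cases avoid the odd-\(n\) difficulties of the real case.

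Given ellipsoidal sections of all \(\F\)-hyperplanes, with \(n\geqslant2\), a classical Brunn-type argument shows that \(K\) itself is an ellipsoid. Restricting to real \(3\)-spaces, their sections by \(\F\)-hyperplanes already cover enough planes for the Busemann--Kakutani characterisation. The Hermitian conclusion then follows as above.
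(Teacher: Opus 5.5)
Your reduction to \(\dim_{\F}X=n+1\), the circled star body \(K\), and the passage from an \(\F\)-circled ellipsoid to a Hermitian form are fine. The closing Brunn/Busemann--Kakutani step is unnecessary and, as stated, relies on convexity. Since \(n\geqslant2\), any two vectors lie in a common \(\F\)-hyperplane, so ellipsoidal sections give the parallelogram identity directly.

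The gap is the main step: the claim that \(G=\Aut_{\R}(L)\) must be transitive on \(S(H)\) because a top Chern or symplectic class over \(\F P^n\) is nonzero.

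\begin{itemize}
\item A non-transitive \(G\) yields only a reduction of the structure group of the real bundle \(\ell^\perp\) with fibre \(H\). It does not give a nowhere-vanishing section of \(\ell^\perp\), since \(G\ni-\Id\) fixes no vector.
\item In general it gives no splitting either. A non-transitive \(G\) containing the unit scalars can act irreducibly; for example \(O(2)\wr S_n\), the real-linear symmetry group of the unit ball of complex \(\ell_4^n\).
\item What a nonconstant \(G\)-invariant radial function does give is a \(G\)-invariant harmonic polynomial of some even degree \(j\geqslant2\). This is a nowhere-vanishing section of the corresponding tensor bundle of \(\ell^\perp\). But that bundle has real rank larger than \(\dim_{\R}\F P^n=\delta n\), where \(\delta=\dim_{\R}\F\). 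Such sections therefore always exist, and no Euler class obstructs them.
\item Nor does \(e(\ell^\perp)\neq0\) force transitivity. \(T(S^2\times S^2)\) has nonzero Euler class yet reduces to \(SO(2)\times SO(2)\). On \(\C P^3\), an instance of the previously open range \(n\equiv3\pmod4\), one has \(c(\ell^\perp)=1+h+h^2+h^3=(1+h)(1+h^2)\). So Chern classes cannot even rule out a splitting into a line bundle and a rank-two bundle.
\end{itemize}

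The paper never proves that \(G\) is transitive. Instead it runs the following argument.

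\begin{itemize}
\item It parametrises the \(\F\)-hyperplanes redundantly by the whole unit sphere of \(V_{\R}\), \(u\mapsto H_u\subseteq u^\perp\), precisely so that a normal vector is available.
\item The obstruction to a global section of the reduced principal \(G^\circ\)-bundle lies in \(\pi_{N-2}(G^\circ)\), with \(N=\delta(n+1)\) even. This group is finite by Serre's theorem, so pulling back along a sphere self-map \(\varphi\) of degree \(D\neq0\) kills the obstruction.
\item A Lipschitz trivialisation then gives an exact family with \(A_zy\perp\varphi(z)\) and \(p_K(A_zy)=p_S(y)\).
\item Let \(F_y\) be the radial extension \(tz\mapsto tA_zy\) to the ball \(\Omega\). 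The signed area--degree formula gives \(\int_\Omega\abs{F_y}^{2k}\det DF_y\,dx=D\,p_S(y)^{N+2k}\int_K\abs{u}^{2k}\,du\).
\item Hence every \(p_S^{N+2k}\) is a polynomial, and coprimality of \(N/2\) and \(N/2+1\) makes \(p_S^2\) quadratic.
\end{itemize}

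Ellipsoidality thus comes from polynomial rigidity of an exact family, not from symmetries of the model section. This is the idea your proposal is missing, and without it, or a genuine substitute, the argument does not close.
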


No completeness assumption occurs in \Cref{thm:functional-intro}, and only \((n+1)\)-dimensional subspaces are used. The Lipschitz condition is independent of the auxiliary Euclidean structure. Indeed, if \(\abs\cdot_1\) and \(\abs\cdot_2\) are two Euclidean norms on \(Y\), then the radial identification
\[
 S(Y,\abs\cdot_2)\longrightarrow S(Y,\abs\cdot_1),
 \qquad \theta\longmapsto\frac{\theta}{\abs\theta_1},
\]
is smooth and bi-Lipschitz, and absolute homogeneity expresses \(p\) on the first sphere as the product of a smooth positive factor and its restriction to the second. For a norm, condition~\ref{it:1thA} is automatic. The Mazur--Ulam theorem also shows that the isometries in the following consequence need not be assumed linear, or compatible with multiplication by complex or quaternionic scalars.

\begin{maincorollary}\label{cor:normed-intro}
Let \(X\) be an \(\F\)-normed space, and suppose that, for some \(2\leqslant n<\dim_{\F}X\), all \(n\)-dimensional \(\F\)-subspaces of \(X\) are bijectively isometric as metric spaces. Then the norm of \(X\) is induced by a positive-definite \(\F\)-Hermitian form. Consequently, the norm completion of \(X\) is an \(\F\)-Hilbert space, and \(X\) itself is an \(\F\)-Hilbert space when it is complete.
\end{maincorollary}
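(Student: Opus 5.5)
The plan is to deduce \Cref{cor:normed-intro} from \Cref{thm:functional-intro} applied to \(p=\norm{\cdot}\). First, I will check the hypotheses on \(p\) that do not involve isometries. The norm is definite, and by the axioms of an \(\F\)-normed space it is \(\F\)-absolutely homogeneous: \(\norm{xa}=\norm{x}\abs a\). For condition~\ref{it:1thA}, fix an \((n+1)\)-dimensional \(\F\)-subspace \(Y\) and an auxiliary Euclidean norm \(\abs\cdot\) on \(Y\), viewed as a finite-dimensional real space. All norms on \(Y\) are equivalent, so \(\norm{x}\leqslant C\abs x\) for some constant \(C\). The triangle inequality then gives \(\abs{\norm\theta-\norm{\theta'}}\leqslant\norm{\theta-\theta'}\leqslant C\abs{\theta-\theta'}\). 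Hence \(p\) is Lipschitz on the whole of \(Y\), and in particular on its unit sphere.

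Condition~\ref{it:2thA} is the main point, and I would get it from the Mazur--Ulam theorem. Let \(E,F\) be \(n\)-dimensional \(\F\)-subspaces and let \(\phi\colon E\to F\) be a bijective isometry for the restricted metrics. Each of \(E\) and \(F\) is a real normed space with the restricted norm, and \(\phi\) is a surjective isometry between them. By Mazur--Ulam, \(\phi\) is affine over \(\R\). Therefore \(T:=\phi-\phi(0)\) is a real-linear bijection \(E\to F\), and \(\norm{Tx}=\norm{\phi(x)-\phi(0)}=\norm{x-0}=\norm x\). This is exactly condition~\ref{it:2thA}. No complex or quaternionic linearity of \(T\) is needed, which matches the weak form of condition~\ref{it:2thA}. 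I expect this step to be the only genuine content beyond the theorem, and it is routine once Mazur--Ulam is quoted.

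\Cref{thm:functional-intro} now yields a positive-definite \(\F\)-Hermitian form \(h\) on \(X\) with \(\norm x^2=h(x,x)\), so the norm is induced by \(h\). For the completion \(\widehat X\), I would extend \(h\) by continuity. By Cauchy--Schwarz, \(\abs{h(x,y)}\leqslant\norm x\norm y\), so \(h\) is jointly uniformly continuous on bounded sets and extends uniquely to \(\widehat X\times\widehat X\). The extension is still sesquilinear over \(\F\) with respect to the right scalar multiplication, still Hermitian, and still satisfies \(h(x,x)=\norm x^2\). Hence \(\widehat X\) is a complete \(\F\)-inner product space, that is, an \(\F\)-Hilbert space. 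If \(X\) is already complete, then \(X=\widehat X\).
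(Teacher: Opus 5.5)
Your proposal is correct and follows the paper's proof essentially verbatim. You obtain condition (i) from Lipschitz continuity of a norm on finite-dimensional subspaces, and condition (ii) from Mazur--Ulam, which turns metric isometries into real-linear norm-preserving bijections; then you apply \Cref{thm:functional-intro} and pass to the completion, where your Cauchy--Schwarz extension of \(h\) merely spells out the one-line completion step in the paper.
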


For \(\F=\C\), the Banach-space case of \Cref{cor:normed-intro} completes the complex conjecture; for \(\F=\Hh\), it gives its quaternionic counterpart. Notice that the proof is dimension-free and obtains the inner product before completion. In particular, the infinite-dimensional quaternionic conclusion does not come from Milman's complex theorem by restriction of scalars: the quaternionic \(n\)-subspaces form only a proper subfamily of the complex \(2n\)-subspaces.

The geometric statement from which these conclusions follow is likely to be useful independently.

\begin{theorem}\label{thm:hyperplane-intro}
Let \(n\geqslant2\), let \(V\) be a right \(\F\)-vector space of dimension \(n+1\), and let \(K\subseteq V\) be an \(\F\)-circled star body with positive Lipschitz radial function. If all central \(\F\)-hyperplane sections of \(K\) are real-linearly equivalent, then \(K\) is the unit ball of a positive-definite \(\F\)-Hermitian form.
\end{theorem}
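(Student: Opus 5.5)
Identify \(V\) with \(\F^{n+1}\) carrying the standard Hermitian form and write \(\rho\colon S(V)\to(0,\infty)\) for the radial function of \(K\), so that \(K=\{\theta t: \theta\in S(V),\ 0\leqslant t\leqslant\rho(\theta)\}\). Since \(K\) is \(\F\)-circled, \(\rho(\theta a)=\rho(\theta)\) for unit \(a\in\F\). An \(\F\)-hyperplane of \(V\) is \(u^{\perp}\) for a unit vector \(u\), unique up to right multiplication by unit scalars. The sections are therefore parametrised by the projective space \(\F P^{n}\), and the section \(K\cap u^\perp\) is an \(\F\)-circled star body in an \(n\)-dimensional \(\F\)-space. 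The plan is to show first that one, and hence every, section is a Hermitian ellipsoid. A body all of whose \(\F\)-hyperplane sections are Hermitian ellipsoids is then the Hermitian unit ball: for any two vectors \(x,y\), the span \(x\F+y\F\) has \(\F\)-dimension at most \(2\leqslant n\), so it lies in some hyperplane. Hence the parallelogram identity and the \(\F\)-Hermitian polarisation of \(\rho(\cdot)^{-2}\) hold on every such span, and the global form \(h\) is assembled by the usual polarisation argument.

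For the main step, fix a model section \(K_0=K\cap E_0\). Let \(G\subseteq GL_{\R}(E_0)\) be its real-linear symmetry group, which is compact because \(K_0\) is a star body with positive continuous radial function. It contains \(U(1)\) for \(\F=\C\), or \(Sp(1)\) acting by right multiplication for \(\F=\Hh\). By hypothesis, for each \(u\) there is a real-linear bijection \(T_u\colon E_0\to u^\perp\) with \(T_u K_0=K\cap u^\perp\), unique up to \(G\). Following the bundle-degree mechanism of Lu and Yang, this gives a principal \(G\)-bundle over \(\F P^{n}\) (or over \(S^{\dim_\R V-1}\) after pulling back), namely the bundle of ``frames'' \(\{T_u\}\), which reduces the structure group of the tautological hyperplane bundle \(u^\perp\) from \(GL_\R\) to \(G\). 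The Lipschitz hypothesis on \(\rho\) is used here: it makes the frame set vary upper semicontinuously and lets one choose continuous local sections, for instance by taking John-type normalisations or a Lipschitz approximation. This produces a genuine continuous reduction.

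The topological input is then that the real vector bundle \(\gamma^\perp\to\F P^{n}\) (tangent-like, of real rank \(n\dim_\R\F\)) admits no reduction to any compact group \(G\) that is strictly smaller than the full unitary group \(U(n)\) resp.\ \(Sp(n)Sp(1)\) containing the scalars. The natural argument is by degree. Compose a putative reduction with a suitable equivariant map \(G\)-orbits\(\,\to\) sphere, built from a non-invariant test vector, and compare the Brouwer degree with the Euler or Chern class of \(\gamma^\perp\), which is nonzero. Concretely, if \(G\) fixes a real line or a proper \(\F\)-subspace, we get a nonvanishing section of a bundle whose top class is nonzero, which is a contradiction. Closed subgroups of \(O(E_0)\) (after averaging an inner product) that contain the scalars and act transitively on the sphere are then classified; the transitive ones are contained in the Hermitian-unitary groups, or are exceptional cases such as \(Spin(7)\) or \(G_2\), which must be excluded by degree or dimension counts. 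Transitivity of \(G\) on \(S(E_0)\) forces \(\rho|_{E_0}\) to be constant, so \(K_0\) is a Euclidean ball, and it is \(\F\)-Hermitian because \(G\) contains the \(\F\)-scalars.

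The main obstacle is this topological step: showing that a \(G\)-reduction forces \(G\) to be transitive on the sphere. The difficulty is to handle every possible compact \(G\), including the exceptional transitive groups and the non-transitive ones when \(n\equiv3\pmod4\), where the naive Euler-class obstruction vanishes over \(\R\). I would first try Lu--Yang's construction: a \(G\)-equivariant map from the frame bundle to a sphere, built from \(\rho\) itself (e.g.\ the gradient direction of \(\rho\) at a maximum orbit), whose degree is computed two ways. The \(\F\)-structure, meaning the extra \(U(1)\) or \(Sp(1)\) symmetry and the complex or quaternionic projective base, should supply the nonvanishing characteristic classes that the real case lacked.
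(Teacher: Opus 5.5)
Your final step is fine and matches the paper: if every section is a Hermitian ellipsoid, then $K$ is Hermitian by two-vector coverage, the parallelogram law and scalar polarisation.

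The gap is the main step. You want to show that a reduction of the hyperplane bundle to $G=\Aut_{\R}(K_0)$ forces $G$ to be transitive on the sphere. Over $\C$ with $n$ odd, no argument using only the existence of such a reduction can do this, and this range contains the open case $n\equiv3\pmod4$. Here is why. Write $\C^{n+1}=\Hh^{(n+1)/2}$ as a right $\Hh$-module. Then $uj$ is complex-orthogonal to $u$, and $(u\mu)j=(uj)\bar\mu$ for $\mu\in\C$. So $u\C\mapsto uj\C$ is a complex line subbundle of $\gamma^\perp\to\C P^n$, and
\[
 u\C\longmapsto\bigl\{uj\lambda+w:\ \lambda\in\C,\ w\perp_{\C}\{u,uj\},\ \abs\lambda^4+\abs w^2\leqslant1\bigr\}
\]
is a continuous field of pairwise unitarily equivalent, circled, non-ellipsoidal star bodies in the complex hyperplanes $u^{\perp}$. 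Thus the hyperplane bundle does reduce to a non-transitive compact group containing the scalars, namely the symmetry group of this body, which contains $U(1)\times U(n-1)$.

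This also breaks your concrete claim. A $G$-invariant proper $\F$-subspace gives a subbundle, not a nonvanishing section, and here that subbundle exists. Characteristic classes, equivariant maps to spheres, and classifying transitive groups cannot rescue the argument. What is missing is any use of the fact that the sections are cut from a single body $K$; the field above is not of that form.

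The paper, following Lu and Yang, never proves that $G$ is transitive. It parametrises the hyperplanes by the real sphere $S(V)$, with $N=\delta(n+1)$ even, rather than by $\F P^n$, so that each section comes with a real unit normal. After reducing to $G^\circ$ using simple connectivity of $S^{N-1}$, the bundle class lies in $\pi_{N-2}(G^\circ)$. This group is finite by Serre's theorem because $N-2$ is even. Pulling back by a self-map $\varphi$ of degree $D\geqslant1$ kills the class, and a Lipschitz trivialisation gives $A_z$ with $A_zy\perp\varphi(z)$ and $p_K(A_zy)=p_S(y)$.

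The body $K$ enters exactly through this last identity: it places the boundary image of the cone map $x\mapsto\mathcal A(x)y$ on $p_S(y)\partial K$. Together with the orthogonality, this gives that cone map degree $D$ onto $p_S(y)K$. The signed area--degree formula then shows that $p_S^{N+2k}$ is a polynomial for every $k\geqslant0$. This is where the Lipschitz radial function is genuinely needed (Rademacher, Lusin (N), null boundary), not merely for continuous local sections. Finally, unique factorisation applied to $p_S^{N}$ and $p_S^{N+2}$ makes $p_S^2$ quadratic.

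The gradient-based equivariant map in your last paragraph is not this mechanism. As the example shows, no refinement of a pure obstruction argument can replace it.
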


In the complex case, \Cref{thm:hyperplane-intro} strengthens the geometric result of Bracho and Montejano in two independent respects: a circled star body replaces a symmetric convex body, and the equivalences between its complex hyperplane sections need only be real-linear. Thus convexity is removed, while circledness is retained for the final scalar reconstruction. The underlying real ellipsoid theorem, \Cref{thm:spherical-intro}, requires neither convexity nor central symmetry.

The remaining main results extend this rigidity beyond Banach spaces, first to graded Fr\'echet spaces and then to vector spaces equipped with a compatible translation-invariant metric. The projective-limit description in \Cref{thm:reduced-frechet} is standard once the defining seminorms have been shown to be Hilbertian; the substantive assertion is their eventual Hilbertianity. The simultaneous eigenvalue argument in \Cref{thm:simultaneous-collapse} gives the stronger conclusion that an entire separating family collapses to one Hermitian norm.

The homogeneous formulation also applies to Fr\'echet spaces, provided that one fixes either a grading or a compatible translation-invariant metric. Following the graded and multi-seminormed viewpoint of \cite{BargetzKakolKubis2017,KawachLopezAbad2022}, we shall call an increasing defining sequence \((p_k)_{k\in\N}\) \emph{reduced \(n\)-isometric} if, whenever \(n<\dim_{\F}(X/\ker p_k)\), all \(n\)-dimensional subspaces of the normed quotient \(X/\ker p_k\) are bijectively isometric. The isometry may depend on \(k\).

\begin{maintheorem}\label{thm:reduced-frechet}
Let \(X\) be a Hausdorff Fr\'echet space over \(\F\), where \(\dim_{\F}X>n\geqslant2\). The following assertions are equivalent.
\begin{enumerate}
\item \(X\) admits an increasing defining sequence of Hilbertian seminorms.
\item \(X\) admits a reduced \(n\)-isometric increasing defining sequence.
\end{enumerate}
For every sequence \((p_k)_{k\in\N}\) as in \emph{(2)}, there is \(k_0\in\N\) such that the cofinal tail \((p_k)_{k\geqslant k_0}\) consists of Hilbertian seminorms, and
\[
X\cong\varprojlim_{k\geqslant k_0}\widehat{X/\ker p_k},
\]
where the spaces on the right are \(\F\)-Hilbert spaces and the canonical bonding maps have dense range.
\end{maintheorem}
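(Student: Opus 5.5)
The plan is to prove (1)\(\Rightarrow\)(2) directly and to get (2)\(\Rightarrow\)(1), together with the structural statement, from \Cref{cor:normed-intro} applied to each normed quotient \(X/\ker p_k\).

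\emph{(1)\(\Rightarrow\)(2).} Let \((p_k)\) be an increasing defining sequence of Hilbertian seminorms. Each quotient \(X/\ker p_k\) is then an \(\F\)-inner product space. Any two \(n\)-dimensional \(\F\)-subspaces of it admit orthonormal bases, and the \(\F\)-linear map matching these bases is an isometry. So the sequence is reduced \(n\)-isometric.

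\emph{(2)\(\Rightarrow\)(1).} Fix a reduced \(n\)-isometric increasing defining sequence \((p_k)\).

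1. Since \(X\) is Hausdorff, \(\bigcap_k\ker p_k=\{0\}\). Since \(p_k\leqslant p_{k+1}\), the kernels decrease. If every quotient had \(\F\)-dimension at most \(n\), the kernels would have codimension at most \(n\) and would stabilise at some \(\ker p_m\). That subspace is then contained in every kernel, so \(\ker p_m=\{0\}\) and \(\dim_{\F}X\leqslant n\), contradicting the hypothesis.

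2. Hence there is \(k_0\) with \(\dim_{\F}(X/\ker p_{k_0})>n\). For \(k\geqslant k_0\) we have \(\ker p_k\subseteq\ker p_{k_0}\), so \(X/\ker p_k\) surjects onto \(X/\ker p_{k_0}\) and also has dimension \(>n\).

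3. For each \(k\geqslant k_0\), the quotient \(X/\ker p_k\) is an \(\F\)-normed space in which all \(n\)-dimensional subspaces are bijectively isometric. \Cref{cor:normed-intro} shows its norm comes from a positive-definite \(\F\)-Hermitian form \(h_k\).

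4. Pulling \(h_k\) back to \(X\) shows that \(p_k\) is Hilbertian for every \(k\geqslant k_0\). The tail \((p_k)_{k\geqslant k_0}\) is cofinal in an increasing sequence, so it still defines the topology. This gives (1) and the eventual-Hilbertianity claim.

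5. By \Cref{cor:normed-intro} again, the completions \(H_k=\widehat{X/\ker p_k}\) are \(\F\)-Hilbert spaces.

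\emph{The projective-limit description.} This is the standard Arens–Michael argument for the tail.

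1. Since \(p_k\leqslant p_{k+1}\), the identity on \(X\) induces contractions \(X/\ker p_{k+1}\to X/\ker p_k\). These extend to bonding maps \(H_{k+1}\to H_k\), whose range contains the dense image of \(X\), so they have dense range.

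2. The canonical map \(\iota\colon X\to\varprojlim H_k\) is injective because the \(p_k\) separate points. It is a topological embedding onto its image because the limit topology is generated by the \(p_k\).

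3. The image is dense: a basic neighbourhood in the limit involves only one coordinate \(H_k\), where \(X/\ker p_k\) is dense.

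4. \(X\) is complete and \(\iota\) is an embedding, so \(\iota(X)\) is closed. Being also dense, \(\iota(X)\) is the whole limit, so \(\iota\) is an isomorphism.

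The main obstacle is the dimension count in step 1 of (2)\(\Rightarrow\)(1). The hypothesis is only invoked for quotients of dimension \(>n\), so one must check that such quotients occur along a cofinal tail. This depends on the monotonicity of the kernels and on Hausdorffness. Everything else is an application of \Cref{cor:normed-intro} followed by routine projective-limit bookkeeping.
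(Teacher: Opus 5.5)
Your proposal is correct and follows the paper's proof essentially step for step. It uses the same kernel-stabilisation argument to get \(\dim_{\F}(X/\ker p_k)>n\) eventually, the same appeal to \Cref{cor:normed-intro} on each normed quotient, and the same orthonormal-basis argument for the converse. The only variation is in the surjectivity of \(X\to\varprojlim H_k\). You argue that the image is dense and, being complete, closed. The paper instead builds an explicit \(p_j\)-Cauchy sequence \(x_k\) with \(\norm{x_k+\ker p_k-\xi_k}<2^{-k}\). Both are standard and equally valid.
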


This conclusion does not imply normability. A stronger conclusion is available when the same map acts at every seminorm level, up to a level-dependent positive factor.

\begin{maintheorem}\label{thm:simultaneous-collapse}
Let \(\F\in\{\R,\C,\Hh\}\). Let \(X\) be a right \(\F\)-vector space, let \(\mathcal Q\) be a separating family of \(\F\)-seminorms, and let \(2\leqslant n<\dim_{\F}X\). Suppose that, for every pair of \(n\)-dimensional subspaces \(E,F\subseteq X\), there is an \(\F\)-linear isomorphism \(T\colon E\to F\) such that, for each \(p\in\mathcal Q\), there is \(c_p(T)>0\) for which \(p(Tx)=c_p(T)p(x)\) for every \(x\in E\). Then there is a positive-definite \(\F\)-Hermitian form \(h\) on \(X\) such that every member of \(\mathcal Q\) is either zero or a positive multiple of \(x\mapsto h(x,x)^{1/2}\). Thus \(\mathcal Q\) generates a norm topology; if this topology is complete, then \(X\) is an \(\F\)-Hilbert space.
\end{maintheorem}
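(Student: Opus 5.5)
We plan to reduce \Cref{thm:simultaneous-collapse} to the single-seminorm rigidity results already available: \Cref{cor:normed-intro} for \(\F\in\{\C,\Hh\}\), and the real theorem of Lu and Yang together with the classical cases over \(\R\). First we fix \(p\in\mathcal Q\) with \(p\neq0\) and look at \(N=\ker p\), an \(\F\)-subspace. If \(\dim_{\F}N\geqslant n\), we choose an \(n\)-dimensional \(E\subseteq N\). For any \(n\)-dimensional \(F\), the isomorphism \(T\colon E\to F\) gives \(p(Tx)=c_p(T)p(x)=0\), so \(F\subseteq N\). Every vector lies in some \(n\)-dimensional subspace, so \(N=X\) and \(p=0\). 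Hence for \(p\neq0\) we have \(\dim N<n\). Next we show that in fact \(N=0\). Suppose \(0\neq v\in N\). Since \(\dim X>n\), we can take an \(n\)-dimensional \(E\ni v\), and also an \(n\)-dimensional \(F\) with \(F\cap N=0\). This is possible because \(\dim N<n<\dim X\), so a generic \(F\) meets \(N\) trivially whenever \(\dim N+n\leqslant\dim X\). When \(\dim N+n>\dim X\), we instead compare an \(F\) with \(\dim(F\cap N)\) minimal against an \(E\) with \(\dim(E\cap N)\) maximal. In both cases \(T\) maps \(E\cap N\) injectively into \(F\cap N\), which is impossible. Therefore each nonzero \(p\in\mathcal Q\) is a norm. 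The scalars \(c_p(T)\) cause no trouble here: all \(n\)-subspaces become \(\F\)-linearly isometric to one another after rescaling, and the rescaled \(T\) is then a linear isometry.

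Now \Cref{cor:normed-intro} (or its real analogue) applies to each nonzero \(p\), giving a form \(h_p\) with \(p^2=h_p(x,x)\). The substantive step is to show that any two such forms are proportional. We fix nonzero \(p,q\in\mathcal Q\) and an \(n\)-dimensional \(E\), and consider the relative operator \(A_E\) on \(E\). This is the \(h_p\)-self-adjoint positive operator with \(h_q=h_p(A_E\cdot,\cdot)\). For any \(F\), the map \(T\) is an \(h_p\)-similarity and an \(h_q\)-similarity at the same time. Therefore \(T\) conjugates \(A_E\) to \(A_F\) up to the positive factor \(c_q(T)^2/c_p(T)^2\). It follows that the normalised eigenvalue ratios, meaning the spectrum of \(A_E\) divided by its largest eigenvalue, are the same for every \(n\)-subspace \(E\).

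The remaining task is to use this to force \(h_q=\lambda h_p\). We expect this to be the main obstacle. We will work in an \((n+1)\)-dimensional subspace \(V\), where \(h_q=h_p(A\cdot,\cdot)\) with eigenvalues \(\mu_1\leqslant\dots\leqslant\mu_{n+1}\). The spectrum of the compression of \(A\) to a hyperplane interlaces with that of \(A\), by Cauchy interlacing. As the hyperplane varies, the compressed eigenvalues sweep continuously through the interlacing intervals. In particular, the hyperplane orthogonal to an eigenvector for \(\mu_{n+1}\) has top eigenvalue \(\mu_n\), while the one orthogonal to an eigenvector for \(\mu_1\) has top eigenvalue \(\mu_{n+1}\). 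We intend to compare the ratio vectors of these two hyperplanes, and of a continuous family between them. Since \(n\geqslant2\), these ratio vectors have at least two entries, and equality of the normalised spectra along the family should force \(\mu_1=\dots=\mu_{n+1}\). If this comparison is not decisive, the fallback is a trace-to-largest-eigenvalue invariant, which is continuous along the family and will have to be compared at the two ends. Once \(A\) is shown to be scalar on every \(V\), the usual two-vector reduction shows that \(h_q/h_p\) is a global constant. Taking \(h=h_{p_0}\) for one nonzero \(p_0\) then gives the statement, and the normability and Hilbert-space conclusions follow at once.
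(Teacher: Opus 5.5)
Your outline follows the paper's architecture. It has four steps:
\begin{itemize}
\item a dimension count showing that every nonzero member of \(\mathcal Q\) is a norm (the paper argues inside one \((n+1)\)-dimensional \(Y\), comparing a hyperplane containing \(Y\cap\ker p\) with one meeting it in codimension one; your global count is also valid);
\item single-norm rigidity from \Cref{cor:normed-intro}, or from Gromov and Lu--Yang when \(\F=\R\);
\item proportional spectra of the relative operator on hyperplanes;
\item the two-vector globalisation.
\end{itemize}

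The gap is the decisive eigenvalue step. You leave it at ``should force'', and the comparisons you actually name cannot close it. The hyperplanes \(e_{n+1}^\perp\) and \(e_1^\perp\) give only \((\mu_2,\ldots,\mu_{n+1})=c(\mu_1,\ldots,\mu_n)\), that is, \(\mu_{r+1}=c\mu_r\) with \(c\geqslant1\). A geometric spectrum such as \((1,2,4)\) for \(n=2\) passes this test. Your fallback fails for the same reason. The ratio of trace to largest eigenvalue, like any scale-invariant function of the spectrum, takes equal values on proportional multisets. Comparing it at the two ends therefore gives nothing new.

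What is missing is a third hyperplane on which the geometric progression visibly breaks. The paper takes the span of all eigenvectors other than \(e_2\). Its spectrum \(\{\mu_1,\mu_3,\ldots,\mu_{n+1}\}\) has condition number \(c^n\), while deleting \(\mu_1\) gives \(c^{n-1}\). Proportional multisets have equal condition numbers, so \(c=1\). This is where \(n\geqslant2\) enters: it makes the indices \(2\) and \(n+1\) distinct.

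Your continuous family also works once you use an interior member. The \(h_p\)-orthogonal complement of \(\cos t\,e_1+\sin t\,e_{n+1}\) is spanned by \(e_2,\ldots,e_n\) and \(-\sin t\,e_1+\cos t\,e_{n+1}\). The compressed spectrum is therefore \(\{\mu_2,\ldots,\mu_n,m(t)\}\) with \(m(t)=\sin^2t\,\mu_1+\cos^2t\,\mu_{n+1}\). By the intermediate value theorem, choose \(t\) with \(m(t)=\mu_2\); the condition number there is \(\mu_n/\mu_2\). At \(t=0\) it is \(\mu_{n+1}/\mu_2\). Hence \(\mu_n=\mu_{n+1}\), and the progression forces \(c=1\). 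With this step supplied, the rest of your argument goes through.
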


A compatible translation-invariant metric gives a third formulation. For such a metric \(d\), put \(B_r^d=\{x:d(x,0)\leqslant r\}\).

\begin{maintheorem}\label{thm:metric-frechet}
Let \(\F\in\{\R,\C,\Hh\}\). Let \(X\) be a Fr\'echet right \(\F\)-space, let \(2\leqslant n<\dim_{\F}X\), and let \(d\) be a compatible translation-invariant metric. Suppose that for every pair of \(n\)-dimensional \(\F\)-subspaces \(E,F\subseteq X\), there is an \(\F\)-linear bijection \(T\colon E\to F\) satisfying
\[
d(Tx,Ty)=d(x,y)
\qquad(x,y\in E).
\]
Suppose also that there is a set \(\mathcal R\subseteq(0,\infty)\), with \(0\in\overline{\mathcal R}\), for which one of the following assertions holds.
\begin{enumerate}
\item Every \(B_r^d\), \(r\in\mathcal R\), is \(\F\)-absolutely convex.
\item For every \(r\in\mathcal R\) and every finite-dimensional \(\F\)-subspace \(W\subseteq X\), the section \(B_r^d\cap W\) is a compact \(\F\)-circled star body in \(W\) with positive Lipschitz radial function.
\end{enumerate}
If \(\F=\R\) and assertion~\emph{(2)} is used, assume in addition that \(n\) is odd.
Then a positive-definite \(\F\)-Hermitian form on \(X\) induces the original topology, and \(X\) is an \(\F\)-Hilbert space. If every closed centred \(d\)-ball is \(\F\)-absolutely convex, the corresponding norm may be chosen so that
\[
d(x,y)=\phi\bigl(\norm{x-y}\bigr),
\]
where \(\phi\colon[0,\infty)\to[0,\infty)\) is non-decreasing and subadditive, vanishes at \(0\), and is strictly positive on \((0,\infty)\).
\end{maintheorem}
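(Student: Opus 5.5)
We plan to reduce \Cref{thm:metric-frechet} to the homogeneous results already stated, namely \Cref{thm:functional-intro} in case~(2) (and its real analogue, for which the oddness of \(n\) is needed) and \Cref{thm:simultaneous-collapse} in case~(1). The organising observation is that each ball \(B_r^d\), \(r\in\mathcal R\), determines a Minkowski functional
\[
p_r(x)=\inf\{t>0: x\in tB_r^d\},
\]
and any \(d\)-isometric \(\F\)-linear bijection \(T\colon E\to F\) satisfies \(T(B_r^d\cap E)=B_r^d\cap F\). Consequently \(p_r(Tx)=p_r(x)\) for all \(x\in E\), and this holds with the same \(T\) for every \(r\) at once.

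\emph{Case~(1).} Each \(B_r^d\) is \(\F\)-absolutely convex. Since \(d\) is compatible, \(B_r^d\) is a neighbourhood of \(0\), so it is absorbing and \(p_r\) is an \(\F\)-seminorm. Because \(0\in\overline{\mathcal R}\) and the balls shrink to \(\{0\}\) (\(d\) is a metric), the family \(\{p_r\}_{r\in\mathcal R}\) is separating. Fixing one \(T\) for each pair \(E,F\), the invariance above gives \(c_{p_r}(T)=1\), so \Cref{thm:simultaneous-collapse} applies. It yields a Hermitian form \(h\) with every \(p_r\) a positive multiple of \(\norm{\cdot}=h(\cdot,\cdot)^{1/2}\). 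Thus the balls \(B_r^d\) are norm balls \(\{\norm x\le \rho(r)\}\). As \(r\to0\) through \(\mathcal R\) they form a neighbourhood base of \(0\) for the original topology, so the two topologies coincide. Completeness of the Fréchet space then makes \(X\) an \(\F\)-Hilbert space.

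For the final formula, assume all closed centred balls are absolutely convex. Then each \(B_r^d\) is a closed \(\norm{\cdot}\)-ball, and \(d(x,0)\) depends only on \(\norm x\), say \(d(x,0)=\phi(\norm x)\). The function \(\phi\) is non-decreasing because the balls are nested. It vanishes exactly at \(0\) because \(d\) is a metric. Subadditivity follows from the triangle inequality applied to collinear vectors: \(\phi(s+t)=d(x+y,0)\le d(x,0)+d(y,0)\) with \(\norm x=s\), \(\norm y=t\), \(y\) a positive multiple of \(x\). Translation invariance then gives \(d(x,y)=\phi(\norm{x-y})\).

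\emph{Case~(2).} Here \(p_r\) need not be a seminorm, but it is definite and \(\F\)-absolutely homogeneous on \(X\). On each finite-dimensional \(W\) it is the reciprocal of the positive Lipschitz radial function of the section, so condition~\ref{it:1thA} of \Cref{thm:functional-intro} holds. Condition~\ref{it:2thA} holds via the \(\F\)-linear (hence real-linear) isometries \(T\). Applying \Cref{thm:functional-intro}, or its real counterpart for odd \(n\) (the Lu--Yang setting), we get, for each \(r\), a Hermitian form \(h_r\) with \(p_r^2=h_r\).

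\textbf{Main obstacle.} We must show that the norms \(\norm{\cdot}_r\) all define the same topology, which should also be the topology of \(X\). I would first try to make the \(h_r\) proportional. The same \(T\) is an isometry for every \(\norm{\cdot}_r\), so one can fix \(r_0\) and consider the \(h_{r_0}\)-self-adjoint operator \(A\) with \(h_r=h_{r_0}(A\cdot,\cdot)\). The transitivity of the isometries on \(n\)-dimensional subspaces of any \((n+1)\)-dimensional subspace should force \(A\) to be scalar there, mimicking the simultaneous eigenvalue argument of \Cref{thm:simultaneous-collapse}. Alternatively, one can argue directly by applying \Cref{thm:simultaneous-collapse} to the family \(\{p_r\}\), now known to consist of seminorms. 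That second route is the one I would use: once \(\{p_r\}\) is a separating family of seminorms with a common \(T\), the topological conclusion follows exactly as in Case~(1).
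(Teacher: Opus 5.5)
Your proposal is correct and follows the paper's proof essentially step for step. The paper also takes Minkowski functionals of the balls, applies \Cref{thm:simultaneous-collapse} with \(c_{p_r}(T)=1\) in case~(1), uses \Cref{thm:functional-intro} followed by \Cref{thm:simultaneous-collapse} in case~(2) (your ``second route''), and derives \(\phi\) by the same level-set argument.

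Two minor points. In the real case~(2) the sections are symmetric but possibly non-convex, so you should not cite Lu--Yang's theorem, which concerns convex bodies. Instead, draw the Hermitian conclusion from \Cref{thm:spherical-intro} on each \((n+1)\)-dimensional subspace (where \(N=n+1\) is even and at least \(4\)), followed by the parallelogram identity. In case~(1), any \(r\) with \(B_r^d=X\) gives \(p_r=0\); \Cref{thm:simultaneous-collapse} permits this, and the paper simply discards such radii.
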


The proof starts from the Lipschitz bundle-degree argument of Lu and Yang \cite[Sections~2--4]{LuYang2026}, developed there for origin-symmetric convex bodies and their hyperplane sections. Mutually equivalent sections determine a principal bundle of exact section maps. After reduction to the identity component, a self-map of the parameter sphere of positive degree annihilates the bundle class. A Lipschitz trivialisation then gives an exact family to which the signed degree formula applies. It follows that two consecutive even powers of Minkowski's functional of a reference section are polynomials; unique factorisation makes its square quadratic. The real ambient dimensions and the relevant homotopy-group indices have the required parity over both \(\C\) and \(\Hh\). A final polarisation argument recovers the scalar structure. We give the details needed to pass from origin-symmetric convex bodies and hyperplane sections to possibly non-convex star bodies and the more general section fields used below.

\section{Preliminaries}\label{sec:preliminaries}

We write \(\N=\{1,2,\ldots\}\). Euclidean structures, adjoints, tensor products, and measures are real unless a scalar field is indicated. Vector spaces over \(\F\) are right modules. An \(\F\)-Hermitian form is conjugate-linear in the first variable and right-linear in the second; for \(\Hh\) we use the conventions of \cite{AlpayColomboSabadini2015}. A set \(K\) is \emph{\(\F\)-circled} if \(Ka=K\) whenever \(a\in\F\) and \(\abs a=1\).

An \(\F\)-seminorm \(p\) is \emph{Hilbertian} if \(p(x)^2=h(x,x)\) for a positive-semidefinite \(\F\)-Hermitian form \(h\). Its radical is then \(\ker p\). When \(h\) is positive definite, we also call \(p\) a \emph{Hermitian norm}. Completeness is not part of this terminology. A graded Fr\'echet space is a Fr\'echet space with a fixed increasing defining sequence of seminorms. A subset \(C\) of a right \(\F\)-vector space is \emph{\(\F\)-absolutely convex} if \(xa+yb\in C\) whenever \(x,y\in C\) and \(\abs a+\abs b\leqslant1\).

Let \(V\) be a finite-dimensional real Euclidean space, with closed unit ball \(B_V\) and unit sphere \(S(V)\). A \emph{star body about the origin} is a set
\begin{equation}\label{eq:star-body}
 K=\{t\theta:\theta\in S(V),\ 0\leqslant t\leqslant\rho_K(\theta)\},
\end{equation}
where \(\rho_K\colon S(V)\to(0,\infty)\) is continuous. \emph{Minkowski's functional} of \(K\) is \(p_K(x)=\inf\{t>0:x\in tK\}\). Thus \(p_K(0)=0\) and \(p_K(x)=\abs x/\rho_K(x/\abs x)\) for \(x\neq0\); it need not be a norm.

If \(\rho_K\) is Lipschitz, then so is \(p_K\). Indeed, put \(f=1/\rho_K\), and let \(M=\sup f\) and \(L=\Lip(f)\). If \(0<\abs x\leqslant\abs y\), then \(\abs x\abs{x/\abs x-y/\abs y}\leqslant2\abs{x-y}\), and consequently
\begin{equation}\label{eq:minkowski-lipschitz}
\abs{p_K(x)-p_K(y)}\leqslant(M+2L)\abs{x-y}.
\end{equation}
The cases in which one of the vectors is zero follow directly. No convexity is required for this estimate.

Let \(E\) be an \(m\)-dimensional real Euclidean space and let \(S\subseteq E\) be a star body. Its \emph{normalised covariance} is the positive operator \(C(S)=\vol(S)^{-1}\int_S x\otimes x\,dx\). For every real-linear isomorphism \(T\), change of variables gives
\begin{equation}\label{eq:covariance-transform}
C(TS)=TC(S)T^*.
\end{equation}
In particular, applying \(C(S)^{-1/2}\) puts \(S\) in \emph{covariance position}, where \(C(S)= \Id \).

The covariance here is deliberately taken about the prescribed origin, not about the barycentre. The origin is part of the star-body structure and is fixed by every linear equivalence under consideration. Positivity of \(C(S)\) uses only that \(S\) contains a neighbourhood of the origin: for \(v\neq0\), the function \(x\mapsto(x,v)_{\R}\) is nonzero on an open subset of \(S\), and hence \((C(S)v,v)_{\R}>0\). Neither central symmetry nor convexity is involved.

For \(j\geqslant1\), put \(M_j(S)=\vol(S)^{-1}\int_Sx^{\otimes j}\,dx\in\Sym^j(E)\). Then \(M_j(TS)=T^{\otimes j}M_j(S)\). Polar coordinates give
\begin{equation}\label{eq:polar-moments}
 \vol(S)=\frac1m\int_{S(E)}\rho_S(\theta)^m\,d\theta,
 \qquad
 M_j(S)=\frac{m}{m+j}\,
 \frac{\int_{S(E)}\rho_S(\theta)^{m+j}\theta^{\otimes j}\,d\theta}
      {\int_{S(E)}\rho_S(\theta)^m\,d\theta}.
\end{equation}
It follows that, on any family with common positive inner and finite outer radii, each fixed moment depends Lipschitz-continuously on the radial function in the uniform norm. 

The next lemma combines two results of Lu and Yang \cite[Lemmas~3.1--3.2]{LuYang2026}. In their centrally symmetric setting, the odd moments vanish. In the present setting, the odd moments need not vanish, and the same finite-generation argument simply selects whichever even or odd degrees are required. Neither the covariance normalisation nor the moment and stabiliser arguments use convexity. Thus the proof applies to Lipschitz star bodies about the origin.

\begin{lemma}\label{lem:finite-moments}
Suppose that \(C(S)= \Id\), and put \(G=\Aut_{\R}(S)=\{g\in\operatorname{GL}_{\R}(E):gS=S\}\). Then:
\begin{enumerate}
\renewcommand{\labelenumi}{\textup{(\roman{enumi})}}
\renewcommand{\theenumi}{\roman{enumi}}
\item\label{item:finite-moments-compact} \(G\) is a compact subgroup of \(O(E)\).
\item\label{item:finite-moments-stabiliser} There are positive integers \(j_1,\ldots,j_s\) such that \(G\) is exactly the stabiliser in \(O(E)\) of the tuple \(M(S)=(M_{j_1}(S),\ldots,M_{j_s}(S))\).
\item\label{item:finite-moments-orbit} The orbit map \(O(E)/G\to O(E)M(S)\), given by \(QG\mapsto QM(S)\), is a diffeomorphism onto a compact embedded submanifold, and its inverse is locally Lipschitz for the ambient Euclidean distance.
\end{enumerate}
\end{lemma}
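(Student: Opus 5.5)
For part (i), I would begin with the covariance transformation rule \eqref{eq:covariance-transform}. If \(g\in G\), then \(gS=S\), and so \(\Id=C(S)=C(gS)=gC(S)g^*=gg^*\). Hence \(g\in O(E)\). The group \(G\) is closed in \(O(E)\): if \(g_k\to g\) with \(g_kS=S\), then \(\rho_S(g_k\theta/\abs{g_k\theta})\to\rho_S(g\theta)\) by continuity of \(\rho_S\), and \(gS=S\) follows. So \(G\) is a closed subgroup of the compact group \(O(E)\), and therefore compact and a Lie subgroup. Since \(G\subseteq O(E)\), the condition \(gS=S\) is equivalent to \(\rho_S\circ g=\rho_S\) on \(S(E)\).

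For part (ii), let \(H_j\) be the stabiliser in \(O(E)\) of \((M_1(S),\dots,M_j(S))\). Each \(H_j\) is a closed subgroup, and \(G\subseteq H_j\) because \(M_i(gS)=g^{\otimes i}M_i(S)\). I would show that \(\bigcap_j H_j=G\). Suppose \(Q\in O(E)\) fixes every \(M_j(S)\). By the polar formula \eqref{eq:polar-moments}, together with \(QS(E)=S(E)\) and invariance of \(d\theta\), the measures \(\mu=\rho_S^{m+j}d\theta\) and the pushforward-type measures for \(\rho_{QS}\) have the same \(j\)-th tensor moments; the normalising volumes agree since \(Q\) is orthogonal. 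The obstacle is that the exponent \(m+j\) varies with \(j\). To handle it, I would pair moments: \(\rho^{m+j}\theta^{\otimes j}\) contracted against polynomials. Specifically, \(\int\rho^{m+j}\theta^{\otimes j}\) evaluated on a degree-\(j\) polynomial \(P\) equals \(\int_{S(E)}\rho^{m+j}(\theta)P(\theta)\,d\theta=(m+j)\int_S P(x)\,dx\), up to constants. Equality of all \(M_j\) thus means \(\int_S P=\int_{QS}P\) for all polynomials \(P\). By Stone–Weierstrass on a ball containing both sets, \(\mathbf 1_S\) and \(\mathbf 1_{QS}\) agree as measures, and since star bodies with continuous radial functions are determined by their indicator up to null sets, \(QS=S\). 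This is exactly the step where Lu and Yang keep only even degrees; here all degrees are kept, with no symmetry assumption. Finally, a descending chain of closed subgroups of a compact Lie group stabilises: each strict inclusion lowers either the dimension or the number of components, and the latter is finite once the dimension is fixed. Hence \(H_J=G\) for some \(J\), and we may take \(j_i=i\) for \(i\le J\), or any finite subset achieving the same stabiliser.

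For part (iii), I would apply the standard orbit theorem for the smooth action of the compact Lie group \(O(E)\) on the vector space \(\bigoplus_i\Sym^{j_i}(E)\). The orbit of \(M(S)\) is a compact embedded submanifold, and \(O(E)/G\to O(E)M(S)\) is an injective immersive equivariant map, hence a diffeomorphism because the domain is compact. Its inverse is smooth on a compact embedded submanifold. It is therefore Lipschitz for the intrinsic metric, and since the intrinsic and ambient metrics are comparable on a compact embedded submanifold, it is also locally Lipschitz for the ambient Euclidean distance. The main obstacle is the moment-determination step in (ii). There, care is needed to see that all moments, with their unequal radial exponents, together encode \(\int_S P\) for every polynomial \(P\).
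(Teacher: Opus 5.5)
Your proof is correct. For (i), (iii), and the moment-determination half of (ii) it matches the paper. In (i) the covariance rule gives \(gg^*=\Id\), together with closedness in \(O(E)\). In (ii) equality of all \(M_j\) and of the volumes gives equal integrals of every polynomial over \(S\) and \(QS\); Stone--Weierstrass then identifies the two uniform measures, and \(QS=S\) follows because a star body with positive continuous radial function is the closure of its interior, which is the paper's support argument. In (iii) you run the standard compact-orbit argument that the paper quotes from Lu and Yang; injectivity of the orbit map is exactly where (ii) enters.

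The genuine difference is the finiteness step in (ii). The paper notes that the conditions \(Q^{\otimes j}M_j(S)=M_j(S)\) are polynomial equations in the entries of \(Q\), and applies the Hilbert basis theorem: the ideals generated by the equations of degree \(\leqslant J\) form an ascending chain that stabilises. You instead use the descending chain condition for closed subgroups of the compact Lie group \(O(E)\): at fixed dimension the identity components coincide, so each strict inclusion lowers the finite number of components. The paper's route is purely algebraic and uses no Lie theory in this step. Yours stays inside the compact-group framework that (iii) needs anyway and is just as short. Both give an initial segment \(j=1,\ldots,J\) of degrees.

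One small remark: the detour through the polar formula is unnecessary. \(M_j\) is defined as a volume integral, so pairing \(M_j(S)\) with a symmetric \(j\)-tensor gives \(\vol(S)^{-1}\int_S P\) directly. The varying exponents \(m+j\) that you flag as the main obstacle therefore never actually arise.
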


\begin{proof}
    The proof of~\textup{(\ref{item:finite-moments-compact})} is verbatim that of \cite[Lemma~3.1]{LuYang2026}: from \(C(S)=\Id\), every \(g\in G\) satisfies \(gg^*=\Id\), and \(G\) is a closed subgroup of \(O(E)\), hence compact.

    For~\textup{(\ref{item:finite-moments-stabiliser})}, the same argument requires only one minor modification. In the centrally symmetric setting of Lu and Yang the odd moments vanish, whereas here \(S\) need not satisfy \(S=-S\). We therefore retain the full sequence \((M_j(S))_{j\geqslant1}\). Every member of \(G\) fixes every moment of \(S\). Conversely, suppose that \(Q\in O(E)\) fixes \(M_j(S)\) for every \(j\geqslant1\). The normalised uniform measures on \(S\) and \(QS\) then have the same integral against every polynomial. By the Stone--Weierstrass theorem, they agree on \(C(S\cup QS)\), and hence are equal. Since their supports are \(S\) and \(QS\), respectively, it follows that \(QS=S\).
    
    Finally, the equations \(Q^{\otimes j}M_j(S)=M_j(S)\), \(j\geqslant1\), are polynomial equations in the matrix entries of \(Q\). The Hilbert basis theorem therefore reduces this family to finitely many degrees \(j_1,\ldots,j_s\), whose common stabiliser in \(O(E)\) is still exactly \(G\). This proves~\textup{(\ref{item:finite-moments-stabiliser})}.
    
    With this finite moment tuple in hand, the proof of~\textup{(\ref{item:finite-moments-orbit})} is verbatim that of \cite[Lemma~3.2]{LuYang2026}.
\end{proof}

\section{Polynomial rigidity}\label{sec:polynomial-rigidity}

We use the standard conventions for Brouwer degree and for the degree of maps between oriented spheres, as in \cite[Section~2.3]{LuYang2026}. The following theorem records the form of the degree calculation of Lu and Yang \cite[Lemmas~4.1--4.6]{LuYang2026} needed below. Their degree argument extends to star bodies and positively homogeneous functions. The only point in their proof where convexity is used is in Lemma~4.4, through their Lemma~2.1, to ensure that \(\partial K\) has zero ambient measure. For a star body with Lipschitz radial function, this follows instead because \(\theta\mapsto\rho_K(\theta)\theta\) is a Lipschitz map from \(S(V)\) onto \(\partial K\). Central symmetry is not used in the degree calculation; its earlier role was only to annihilate odd moments, a point already handled in \Cref{lem:finite-moments} by retaining moments of every positive degree. We include the details for completeness.

\begin{theorem}\label{thm:exact-family-intro}
Let \(V\) be an oriented real Euclidean space of dimension \(N>0\), let \(E\) be a nonzero finite-dimensional real vector space equipped with an auxiliary Euclidean structure, let \(K\subseteq V\) be a star body, and let \(p\colon E\to[0,\infty)\). Suppose that:
\begin{enumerate}
    \renewcommand{\labelenumi}{\textup{(\roman{enumi})}}
    \renewcommand{\theenumi}{\roman{enumi}}
    \item\label{item:exact-family-parity} \(N\) is even;
    \item\label{item:exact-family-radial} the radial function of \(K\) is Lipschitz;
    \item\label{item:exact-family-functional} \(p\) is positive away from the origin and positively homogeneous;
    \item\label{item:exact-family-maps} there are a continuous map \(\varphi\colon S(V)\to S(V)\), with \(\deg\varphi\neq0\), and a Lipschitz map \(A\colon S(V)\to\Hom_{\R}(E,V)\);
    \item\label{item:exact-family-identities} the identities
    \begin{equation}\label{eq:exact-family}
    (A_zy,\varphi(z))_{\R}=0
    \quad\hbox{and}\quad
    p_K(A_zy)=p(y)
    \qquad(z\in S(V),\ y\in E)
    \end{equation}
    hold.
\end{enumerate}
Then \(p^{N+2k}\) is a real homogeneous polynomial for every \(k\geqslant0\). In particular, \(p^2\) is a positive-definite real quadratic form.
\end{theorem}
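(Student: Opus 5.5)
We plan to follow the Lu--Yang degree calculation and make explicit where the star-body hypothesis enters. Put \(m=\dim E\) and write \(B_p=\{y\in E:p(y)\leqslant1\}\). The identity \(p_K(A_zy)=p(y)\) with \(p>0\) off the origin forces each \(A_z\) to be injective. Hence \(A_z\) maps \(B_p\) bijectively onto \(K\cap A_z(E)\), and \(A_z(E)\subseteq\varphi(z)^{\perp}\).

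\textbf{Step 1: a degree identity.} Consider the continuous map
\[
\Phi\colon S(V)\times E\times\R\to V,\qquad \Phi(z,y,t)=A_zy+t\varphi(z).
\]
When \(m=N-1\) this is a map between \(N\)-dimensional spaces; we first run the argument in that case and treat general \(m\) by the same fibrewise scheme. For a test function \(G\) of the form \(G(x)=g(p_K(x))\,P(x)\), with \(P\) a homogeneous polynomial, we want a formula expressing \(\deg\varphi\cdot\int_VG\) as an integral over \(S(V)\) of fibre integrals \(\int_E G(A_zy)\,\abs{\det A_z}\,dy\) (after integrating out \(t\), using the orthogonality \((A_zy,\varphi(z))_{\R}=0\)).

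This is where conditions~\textup{(\ref{item:exact-family-parity})}--\textup{(\ref{item:exact-family-maps})} are used:
\begin{itemize}
\item Lipschitz regularity of \(A\) and of \(p_K\) (via \eqref{eq:minkowski-lipschitz}) justifies the area formula for Lipschitz maps;
\item the boundary \(\partial K\) is a Lipschitz image of \(S(V)\), so it is null, which allows truncation to \(K\);
\item evenness of \(N\) fixes the orientation sign of the antipodal symmetry \(z\mapsto-z\), so that the two halves of the fibre contributions add rather than cancel.
\end{itemize}

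\textbf{Step 2: reading off polynomiality.} Substitute \(p_K(A_zy)=p(y)\) into the identity. The left side, \(\int_V g(p_K(x))P(x)\,dx\), is explicit in polar coordinates as in \eqref{eq:polar-moments}. Choosing \(g\) as radial cut-offs and varying \(P\), we aim to show that for each \(k\geqslant0\) the function \(y\mapsto p(y)^{N+2k}\) pairs with a spanning family of test functions exactly as a fixed homogeneous polynomial does. This would give \(p^{N+2k}\in\R[E]\). The parity of \(N+2k\) matches the degree of the homogeneous polynomials produced, which are built from the moments \(M_j\) of \(K\). Since \(\deg\varphi\neq0\), we can divide by it.

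I expect the main obstacle to be Step 1. Specifically: establishing the change-of-variables/degree formula for the merely continuous \(\varphi\) together with the Lipschitz \(A\) (most likely by approximating \(\varphi\) with smooth maps of the same degree and using uniform continuity), and controlling the sign and orientation bookkeeping that needs \(N\) even. I will follow the Lu--Yang Lemmas~4.1--4.6 verbatim there. The only change is in their Lemma~4.4: the null boundary of \(K\) comes from the Lipschitz parametrisation \(\theta\mapsto\rho_K(\theta)\theta\), not from convexity.

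\textbf{Step 3: the final assertion.} Write \(P=p^N\) and \(Q=p^{N+2}\), both polynomials by Step 2, and put \(r=N/2\). Then \(Q^{r}=P^{r+1}\). In the unique factorisation domain \(\R[E]\), comparing multiplicities of each irreducible factor \(f\) gives \(r\,v_f(Q)=(r+1)v_f(P)\). Because \(\gcd(r,r+1)=1\), it follows that \(r\mid v_f(P)\) for every \(f\), so \(P=cR^{r}\) and \(Q=c'R^{r+1}\) for some polynomial \(R\). Hence \(p^2=Q/P\) is a constant multiple of \(R\), which has degree \(2\) by homogeneity. Finally, \(p^2\) is positive away from \(0\), so it is a positive-definite quadratic form.
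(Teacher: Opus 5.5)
Your Step~3 is correct and is the paper's unique-factorisation argument. Steps~1--2, however, do not establish that \(p^{N+2k}\) is a polynomial, and that is the substance of the theorem.

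The set-up is not dimensionally sound. \(S(V)\times E\times\R\) has dimension \(N+m\), not \(N\). For fixed \(z\), the map \((y,t)\mapsto A_zy+t\varphi(z)\) reaches all of \(V\) only when \(m=N-1\). The theorem is applied with \(m=N-2\) (complex case) and \(m=N-4\) (quaternionic case), where your unspecified ``fibrewise scheme'' has nothing to change variables onto.

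More seriously, any identity in which \(y\) is integrated over \(E\) discards the pointwise information you need. With \(G=g(p_K)P\) and \(p_K(A_zy)=p(y)\), such an identity has the form \(\int_E g(p(y))\widetilde P(y)\,dy=c\). For radial cut-offs \(g\) this becomes \(\int_{S(E)}\widetilde P(\theta)\,p(\theta)^{-m-j}\,d\theta=c'\), where \(j=\deg\widetilde P\). Here \(\widetilde P\) is manufactured from \(A\) and \(\varphi\), not chosen freely. These are moment constraints on negative powers of \(p\) against a family you do not control, and they give no route to a pointwise identity \(p^{N+2k}=P_k\).

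The role you give to parity is also misplaced. No antipodal cancellation occurs; evenness of \(N\) enters only in Step~3.

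The missing idea is to fix \(y\neq0\) and use the parameter sphere itself as the \(N\)-dimensional domain. Extend \(A\) homogeneously to a Lipschitz map \(\mathcal A\) on \(B_V\) and set \(F_y(x)=\mathcal A(x)y\).

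Hypothesis~(v) gives \(A_zy\in p(y)\partial K\) and \(A_zy\perp\varphi(z)\). The orthogonality makes \(z\mapsto A_zy/\abs{A_zy}\) homotopic to \(\varphi\) along great-circle arcs, so this map has degree \(D=\deg\varphi\). Write \(F_y\) as the radial homeomorphism \(B_V\to p(y)K\) composed with the cone on this map. Then \(\deg(F_y,\operatorname{int}B_V,v)=D\) for \(v\in\operatorname{int}(p(y)K)\), and the degree is \(0\) off \(p(y)K\). The signed area--degree formula for Lipschitz maps, which is where the null boundary of \(K\) is used, yields
\[
\int_{B_V}\abs{F_y(x)}^{2k}\det DF_y(x)\,dx=D\,p(y)^{N+2k}\int_K\abs{u}^{2k}\,du .
\]
Let \(h_1,\dots,h_N\) be an orthonormal basis of \(V\). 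Almost everywhere, each Jacobian column \((D\mathcal A(x)[h_j])y\) is linear in \(y\), so the left side is a homogeneous polynomial of degree \(N+2k\) in \(y\). Since \(D\neq0\) and \(K\) has positive volume, you can divide by the constant on the right.

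So the polynomial comes from the Jacobian of the family, not from moments of \(K\), which contribute only the constant \(\int_K\abs{u}^{2k}\,du\). Also, \(\varphi\) is never differentiated, so no smoothing of \(\varphi\) is needed.
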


\begin{proof}
    Put \(D=\deg\varphi\), which is nonzero by~\textup{(\ref{item:exact-family-maps})}. Extend \(A\) homogeneously to \(B_V\) by setting \(\mathcal A(0)=0\) and \(\mathcal A(tz)=tA_z\) for \(0<t\leqslant1\) and \(z\in S(V)\). The Lipschitz property in~\textup{(\ref{item:exact-family-maps})} shows that this extension is Lipschitz. Indeed, if \(L_0=\sup_z\norm{A_z}\), \(L_1=\Lip(A)\), \(x=\lambda z\), \(x'=\mu w\), and \(0\leqslant\lambda\leqslant\mu\leqslant1\), then
    \[
        \norm{\lambda A_z-\mu A_w}
        \leqslant \lambda L_1\abs{z-w}+L_0\abs{\lambda-\mu}
        \leqslant (2L_1+L_0)\abs{x-x'}.
    \]
    Here we used \(\lambda\abs{z-w}\leqslant2\abs{x-x'}\) and \(\abs{\lambda-\mu}\leqslant\abs{x-x'}\); the same estimate covers the origin.
    
    Fix \(y\neq0\) and put \(r=p(y)>0\), where positivity follows from~\textup{(\ref{item:exact-family-functional})}. Define \(F_y(x)=\mathcal A(x)y\) for \(x\in B_V\) and \(w_y(z)=A_zy/\abs{A_zy}\) for \(z\in S(V)\). The second identity in~\textup{(\ref{item:exact-family-identities})} ensures that \(A_zy\neq0\). By the first identity in~\textup{(\ref{item:exact-family-identities})}, the unit vectors \(\varphi(z)\) and \(w_y(z)\) are orthogonal, so
    \[
        (z,t)\longmapsto
        \cos(\pi t/2)\varphi(z)+\sin(\pi t/2)w_y(z)
        \qquad(0\leqslant t\leqslant1)
    \]
    is a homotopy through the unit sphere. Hence \(\deg w_y=D\).
    
    The second identity in~\textup{(\ref{item:exact-family-identities})} also gives \(A_zy\in r\partial K\). The radial homeomorphism \(\widehat\Psi_r(t\theta)=tr\rho_K(\theta)\theta\), for \(t\geqslant0\), is joined to the identity by the positive radial interpolation \(t\theta\mapsto t((1-s)+sr\rho_K(\theta))\theta\). It is therefore orientation preserving and maps \(B_V\) onto \(rK\). If \(Cw_y(tz)=tw_y(z)\) denotes the cone on \(w_y\), then \(F_y=\widehat\Psi_r\circ Cw_y\). The boundary characterisation and composition rule for Brouwer degree consequently give
    \begin{equation}\label{eq:cone-degree}
        \deg(F_y,\operatorname{int}B_V,v)=D
        \quad\hbox{for }v\in\operatorname{int}(rK),
    \end{equation}
    whereas the degree is zero for \(v\notin rK\), since \(F_y(B_V)\subseteq rK\). No smoothness of \(\partial K\) is involved.
    
    Let \(\Omega=\operatorname{int}B_V\). The coordinatewise McShane theorem \cite{McShane1934} extends \(F_y\) from the closed ball to a Lipschitz map \(\widetilde F_y\) on \(V\). The extension agrees with \(F_y\) on \(\Omega\), and therefore has the same derivative almost everywhere there. We spell out the measure-theoretic hypotheses of the signed degree formula. Every Lipschitz map between \(N\)-dimensional Euclidean spaces has the Lusin \((N)\) property: if \(A\) has measure zero, then the standard Hausdorff-measure estimate
    \[
        \mathcal H^N(\widetilde F_y(A))
        \leqslant \Lip(\widetilde F_y)^N\mathcal H^N(A)
    \]
    shows that its image has measure zero. Moreover, by~\textup{(\ref{item:exact-family-radial})}, \(\partial K\) is the Lipschitz image of the \((N-1)\)-sphere under \(\theta\mapsto\rho_K(\theta)\theta\), so it has \(N\)-dimensional measure zero. Since \(F_y(\partial\Omega)\subseteq r\partial K\), the image of the boundary is null. Rademacher's theorem provides the derivative almost everywhere; on the bounded set \(\Omega\), its determinant is integrable because \(\widetilde F_y\) is Lipschitz.
    
    Choose \(\chi\in C_c(V)\) which is one on a neighbourhood of \(rK\), and put \(g(v)=\chi(v)\abs v^{2k}\), interpreting \(\abs v^0\) as one. The function \(g\) is bounded and compactly supported. Thus all the hypotheses of the signed area--degree formula \cite[Remark~5.26(ii) and Theorem~5.27]{FonsecaGangbo1995} are satisfied by \(\widetilde F_y\), \(\Omega\), and \(g\). As \(\widetilde F_y=F_y\) on \(\Omega\), \eqref{eq:cone-degree} gives
    \begin{equation}
        \int_\Omega\abs{F_y(x)}^{2k}\det_{\R}DF_y(x)\,dx
        =D\int_{rK}\abs v^{2k}\,dv \notag
        =D\,p(y)^{N+2k}\int_K\abs u^{2k}\,du.
    \label{eq:signed-degree}
    \end{equation}
    
    Regard \(\mathcal A\colon\Omega\to\Hom_{\R}(E,V)\) as a Lipschitz map between Euclidean spaces. Rademacher's theorem supplies a single null set \(Z\subseteq\Omega\), independent of \(y\), outside which \(\mathcal A\) is differentiable. If \(h_1,\ldots,h_N\) is an oriented orthonormal basis of \(V\), then \(DF_y(x)[h_j]=(D\mathcal A(x)[h_j])y\) for \(x\notin Z\). Each column of the Jacobian is real-linear in \(y\), and hence its determinant is a homogeneous polynomial of degree \(N\) in \(y\). Moreover, \(\abs{F_y(x)}^{2k}=(\mathcal A(x)y,\mathcal A(x)y)_{\R}^{k}\) is a homogeneous polynomial of degree \(2k\). Both \(\mathcal A\) and \(D\mathcal A\) are essentially bounded, so every coefficient of the resulting polynomial integrand is bounded by an integrable constant on \(\Omega\). The left-hand side of \eqref{eq:signed-degree} is consequently a homogeneous polynomial \(P_k(y)\) of degree \(N+2k\). The constant \(D\int_K\abs u^{2k}\,du\) is nonzero because \(D\neq0\) by~\textup{(\ref{item:exact-family-maps})} and \(K\) has positive volume. Thus \(p^{N+2k}=P_k/(D\int_K\abs u^{2k}\,du)\) away from zero, and hence everywhere by~\textup{(\ref{item:exact-family-functional})}.
    
    It remains to use~\textup{(\ref{item:exact-family-parity})}. Write \(N=2a\), and let \(P=p^{2a}\) and \(Q=p^{2a+2}\), which are nonzero homogeneous polynomials by what we have just proved. Pointwise, and therefore as a polynomial identity, \(Q^a=P^{a+1}\). For each irreducible polynomial \(\pi\), unique factorisation gives \(a\nu_\pi(Q)=(a+1)\nu_\pi(P)\). Since \(a\) and \(a+1\) are coprime, there is an integer \(\ell_\pi\geqslant0\) such that \(\nu_\pi(P)=a\ell_\pi\) and \(\nu_\pi(Q)=(a+1)\ell_\pi\). Thus \(P\) divides \(Q\). The quotient \(R=Q/P\) is homogeneous of degree two and satisfies \(R(y)=p(y)^2\) for \(y\neq0\), hence also at zero. By~\textup{(\ref{item:exact-family-functional})}, \(R\) is positive away from zero, and therefore it is a positive-definite real quadratic form.
\end{proof}

\section{Spherical sections and exact families}\label{sec:exact-sections}

The preceding theorem turns a nonzero-degree exact family into an ellipsoid. We next show that mutually equivalent spherical sections provide such a family.

\begin{theorem}\label{thm:spherical-intro}
    Let \(V\) be an oriented real Euclidean space of even dimension \(N\geqslant4\), let \(1\leqslant d\leqslant N-1\), and let \(u\mapsto H_u\) be a smooth map from \(S(V)\) to the Grassmannian \(\Gr_d(V)\), with \(H_u\subseteq u^\perp\) for every \(u\). Let \(K\subseteq V\) be a star body with positive Lipschitz radial function. If the sections \(K\cap H_u\) are mutually real-linearly equivalent, then every one of them is a real ellipsoid. If, in addition, every pair of vectors in \(V\) belongs to some \(H_u\), then \(K\) is a real ellipsoid.
\end{theorem}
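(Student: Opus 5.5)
The plan is to reduce \Cref{thm:spherical-intro} to \Cref{thm:exact-family-intro} by building, from the mutual equivalences, an exact Lipschitz family \(A\colon S(V)\to\Hom_{\R}(E,V)\) with \(E=\R^d\) together with a positive-degree map \(\varphi\). First fix a reference section \(S_0=K\cap H_{u_0}\) and put it in covariance position: choose \(E\) and a linear isomorphism \(T_0\colon E\to H_{u_0}\) so that \(S=T_0^{-1}S_0\) satisfies \(C(S)=\Id\). Take \(p=p_S\), which is positive and positively homogeneous. For each \(u\), the set \(P_u\) of real-linear isomorphisms \(T\colon E\to H_u\) with \(T S=K\cap H_u\) is nonempty by hypothesis. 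It is a torsor for \(G=\Aut_{\R}(S)\), which is compact by \Cref{lem:finite-moments}.

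Next I would normalise to orthogonal data. By \eqref{eq:covariance-transform}, every \(T\in P_u\) satisfies \(TT^*=C(K\cap H_u)\). Since \(u\mapsto H_u\) is smooth and the radial functions of the sections vary Lipschitz-continuously by \eqref{eq:polar-moments}, the map \(u\mapsto C_u=C(K\cap H_u)\) is Lipschitz. So \(T\mapsto C_u^{-1/2}T\) identifies \(P_u\) with the isometries \(Q\colon E\to H_u\) carrying \(S\) onto \(C_u^{-1/2}(K\cap H_u)\). Equivalently, these are the \(Q\) with \(Q^{\otimes j}M_j(S)=M_j(C_u^{-1/2}(K\cap H_u))\) for the finitely many degrees of \Cref{lem:finite-moments}(\ref{item:finite-moments-stabiliser}). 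Pick a Lipschitz local orthonormal frame of \(H_u\) to identify \(H_u\cong E\). Then the target moment tuple varies Lipschitz-continuously in the orbit \(O(E)M(S)\), and \Cref{lem:finite-moments}(\ref{item:finite-moments-orbit}) gives Lipschitz local sections \(u\mapsto Q_u G\). Hence the union \(P=\bigcup_u P_u\) is a principal \(G\)-bundle over \(S(V)\) with Lipschitz local trivialisations.

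The main obstacle is producing a global section after pulling back by a nonzero-degree map. I would first reduce to the identity component \(G_0\); this needs a finite cover, or the observation that \(S(V)\) is simply connected since \(N\ge4\), so the \(\pi_0(G)\)-bundle is trivial. Principal \(G_0\)-bundles over \(S^{N-1}\) are classified by clutching elements of \(\pi_{N-2}(G_0)\), a finitely generated group, and for compact Lie groups \(\pi_{N-2}\) with \(N-2\) even is finite; this is the parity point for which \(N\) even is assumed. So let \(m\) be the order of the clutching class. A self-map \(\varphi\colon S(V)\to S(V)\) of degree \(m\ne0\) induces multiplication by \(m\) on the clutching class, so \(\varphi^*P\) is trivial. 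The care needed is to obtain a Lipschitz global section of \(\varphi^*P\) from the continuous one: I would approximate the continuous section by a Lipschitz one in the Lipschitz local charts, using smooth \(\varphi\) and nearness to \(G_0\), with a retraction onto the fibre as in Lu--Yang. This yields a Lipschitz \(z\mapsto T_z\in P_{\varphi(z)}\).

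Setting \(A_z=T_z\) as a map \(E\to H_{\varphi(z)}\subseteq V\) gives \((A_zy,\varphi(z))_{\R}=0\), because \(H_{\varphi(z)}\subseteq\varphi(z)^\perp\). It also gives \(p_K(A_zy)=p_S(y)\), since \(A_z\) maps \(S\) onto \(K\cap H_{\varphi(z)}\). Now \Cref{thm:exact-family-intro} shows that \(p_S^2\) is a positive-definite quadratic form. Thus \(S\) is an ellipsoid, and so is every section, being a linear image of \(S\). For the last claim, take any \(x,y\in V\) and choose \(u\) with \(x,y\in H_u\). On \(H_u\), the function \(p_K^2\) is a quadratic form, so the parallelogram identity holds for \(x,y\), and \(p_K\) is even. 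Hence \(p_K^2\) is a positive-definite quadratic form on \(V\) by the Jordan--von Neumann argument, which uses homogeneity and continuity in place of convexity. So \(K\) is a real ellipsoid.
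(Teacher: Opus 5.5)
Your proposal is correct and follows essentially the same route as the paper: covariance normalisation and the finite-moment orbit lemma give a principal \(\Aut_{\R}(S)\)-bundle with a Lipschitz atlas; simple connectivity of \(S^{N-1}\) reduces it to the identity component; finiteness of \(\pi_{N-2}(G^\circ)\) lets you kill the class by pulling back along a smooth positive-degree self-map; approximation plus a retraction gives a Lipschitz global section; and then \Cref{thm:exact-family-intro} and Jordan--von Neumann polarisation finish the proof. The points you leave implicit are handled in the paper exactly as you indicate: lifting \(u\mapsto Q_uG\) through smooth local sections of \(O(E)\to O(E)/G\), the Lipschitz atlas of the reduced \(G^\circ\)-bundle, and equivariance of the retraction so that it is chart-independent.
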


No injectivity of \(u\mapsto H_u\) is assumed or used. In the complex and quaternionic applications the parametrisation is necessarily redundant: \(H_{ua}=H_u\) whenever \(\abs a=1\). Working over the full sphere is precisely what provides the normal vector required by the degree argument.

For the remainder of this section, let \(\Sigma=S(V)\) and put \(S_u=K\cap H_u\). The construction is real; the complex and quaternionic structures enter only in \Cref{sec:scalar-rigidity}. We use the standard terminology for principal bundles, local sections, transition functions, pullbacks, and reductions of structure group, with the conventions of \cite[Section~2.2]{LuYang2026}. By a finite \emph{Lipschitz atlas} we mean a finite trivialising cover whose local sections and transition functions are Lipschitz.

Let us fix \(u_0\in\Sigma\), put \(E=H_{u_0}\), and choose a reference body \(S\subseteq E\) equivalent to every \(S_u\). After a real-linear change of coordinates, we may suppose that \(C(S)= \Id\). Put \(G=\Aut_{\R}(S)\) and, for \(u\in\Sigma\),
\[
 \cP_u=\{A\in\Hom_{\R}(E,V):A(E)=H_u,\ A(S)=S_u\}.
\]
Right composition by \(G\) is free and transitive on each \(\cP_u\). We now assemble these sets into a principal bundle.

\begin{proposition}\label{prop:exact-bundle}
    For every \(u_*\in\Sigma\), there is a connected neighbourhood \(U\) of \(u_*\) and a Lipschitz map \(u\mapsto A_u\in\Hom_{\R}(E,V)\) such that \(A_u(E)=H_u\) and \(A_u(S)=S_u\). Finitely many such maps may be chosen so that the transition functions into \(G\) are Lipschitz on their overlaps. Consequently, \(\cP=\coprod_{u\in\Sigma}\cP_u\to\Sigma\) is a principal \(G\)-bundle with a finite Lipschitz atlas.
\end{proposition}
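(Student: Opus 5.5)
Near \(u_*\), I will first straighten the moving subspaces \(H_u\) and then correct the resulting section by an orthogonal-group element that I read off from the moments. For the straightening step: since \(u\mapsto H_u\) is smooth into \(\Gr_d(V)\), on a small connected ball \(U\ni u_*\) I take \(B_u=P_{H_u}\circ B_{u_*}\colon E\to H_u\). Here \(P_{H_u}\) is orthogonal projection onto \(H_u\), and \(B_{u_*}\colon E\to H_{u_*}\) is any fixed isomorphism carrying \(S\) onto \(S_{u_*}\). This \(B_u\) is smooth in \(u\) and an isomorphism for \(u\) near \(u_*\). Put \(T_u=B_u^{-1}(S_u)\subseteq E\). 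Its radial function is Lipschitz in \(u\) in the uniform norm, because \(\rho_K\) is Lipschitz and \(B_u\) varies smoothly; the radii also stay uniformly bounded above and below. By \eqref{eq:polar-moments}, each moment \(M_j(T_u)\), and in particular \(C(T_u)\), is then Lipschitz in \(u\). Hence so is \(C_u=C(T_u)^{-1/2}\), since the inverse square root is smooth on positive operators. The body \(C_uT_u\) is in covariance position and linearly equivalent to \(S\). By \eqref{eq:covariance-transform}, any equivalence between two bodies in covariance position is orthogonal, so \(C_uT_u=Q_uS\) for some \(Q_u\in O(E)\) that is unique modulo \(G\).

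The next step is to choose \(Q_u\) Lipschitz, using \Cref{lem:finite-moments}. The moment tuple \(M(C_uT_u)=Q_uM(S)\) is Lipschitz in \(u\) and lies on the orbit \(O(E)M(S)\). By part~(\ref{item:finite-moments-orbit}), the inverse orbit map is locally Lipschitz, so \(u\mapsto Q_uG\in O(E)/G\) is Lipschitz; the stabiliser statement (\ref{item:finite-moments-stabiliser}) is what identifies \(Q_uG\) from the moments. Since \(G\) is a compact Lie subgroup, \(O(E)\to O(E)/G\) has smooth local sections; shrinking \(U\), I compose with one to get a Lipschitz \(Q_u\). Then
\[
A_u=B_u\,C_u^{-1}\,Q_u
\]
is Lipschitz, satisfies \(A_u(E)=H_u\), and satisfies \(A_u(S)=B_uT_u=S_u\). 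I expect this step to be the main obstacle, namely making sure the orbit-map inverse really gives Lipschitz dependence. The delicate point is uniform control of the moments for non-convex star bodies; this is handled by the common inner and outer radii together with the Lipschitz radial function.

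For the atlas, I cover the compact sphere \(\Sigma\) by finitely many such balls \(U_i\), shrinking each slightly so that the maps are Lipschitz on the closures. On an overlap, the transition function is \(g_{ij}(u)=A^i_u{}^{-1}A^j_u\), where \(A^i_u{}^{-1}\) denotes the inverse of \(A^i_u\) regarded as a map \(E\to H_u\). It can be written as \((A^{i*}_uA^i_u)^{-1}A^{i*}_u A^j_u\), which is Lipschitz because \(A^{i*}_uA^i_u\) is uniformly invertible on compact subsets. It lies in \(G\) because both maps carry \(S\) onto \(S_u\). Freeness and transitivity of the right \(G\)-action on each \(\cP_u\) give the local trivialisations \(U_i\times G\to\cP|_{U_i}\), \((u,g)\mapsto A^i_ug\). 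The cocycle identities are automatic, so \(\cP\) is a principal \(G\)-bundle with a finite Lipschitz atlas.
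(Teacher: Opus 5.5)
Your proposal is correct and follows essentially the paper's route: straighten $H_u$ smoothly, normalise the covariance, recover the orthogonal factor modulo $G$ from the moment orbit via \Cref{lem:finite-moments} and a smooth local section of $O(E)\to O(E)/G$, and obtain Lipschitz transitions from $(A_i^*A_i)^{-1}A_i^*A_j$. The only deviation is cosmetic. You straighten with the non-orthogonal map $P_{H_u}B_{u_*}$, whereas the paper uses the orthogonalised $J_u=\Pi_uR_*(R_*^*\Pi_uR_*)^{-1/2}$. This merely replaces the radial function $\rho_K(J_u\theta)$ by $\rho_K(B_u\theta/\abs{B_u\theta})/\abs{B_u\theta}$, which is still uniformly Lipschitz in $u$.
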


\begin{proof}
    Choose an orthogonal map \(R_*\colon E\to H_{u_*}\), and denote by \(\Pi_u\) the orthogonal projection of \(V\) onto \(H_u\). For \(u\) near \(u_*\), set
    \[
        J_u=\Pi_uR_*\bigl(R_*^*\Pi_uR_*\bigr)^{-1/2}.
    \]
    At \(u=u_*\), the operator under the inverse square root is the identity. After shrinking the neighbourhood it remains positive definite, and \(\Pi_uR_*\) has range \(H_u\). Moreover,
    \(J_u^*J_u=(R_*^*\Pi_uR_*)^{-1/2}(R_*^*\Pi_uR_*)(R_*^*\Pi_uR_*)^{-1/2}=\Id\). Thus \(J_u\colon E\to H_u\) is an orthogonal map which depends smoothly on \(u\).
    
    Put \(\widetilde S_u=J_u^{-1}S_u\). Its radial function satisfies \(\rho_{\widetilde S_u}(\theta)=\rho_K(J_u\theta)\). On a relatively compact neighbourhood of \(u_*\), the maps \(J_u\) have uniformly bounded derivatives, and hence \(u\mapsto\rho_{\widetilde S_u}\) is Lipschitz in the uniform norm. These bodies have common positive inner and finite outer radii. It follows from \eqref{eq:polar-moments} that \(C_u=C(\widetilde S_u)\) depends Lipschitz-continuously on \(u\). Its eigenvalues remain in a compact subinterval of \((0,\infty)\), so the smooth functional calculus shows that \(C_u^{1/2}\) and \(C_u^{-1/2}\) are Lipschitz as well.
    
    For each \(u\), choose a real-linear isomorphism \(T_u\) with \(\widetilde S_u=T_uS\); no continuity of this choice is needed. Since \(C(S)= \Id\), \eqref{eq:covariance-transform} gives \(C_u=T_uT_u^*\). It follows that \(C_u^{-1/2}T_u\) is orthogonal, and therefore \(\widehat S_u=C_u^{-1/2}\widetilde S_u\) is an orthogonal image of \(S\). For each degree selected in \Cref{lem:finite-moments},
    \(M_j(\widehat S_u)=(C_u^{-1/2})^{\otimes j}M_j(\widetilde S_u)\). Hence the finite moment tuple \(u\mapsto M(\widehat S_u)\) is a Lipschitz map into the orbit \(O(E)M(S)\).
    
    By \Cref{lem:finite-moments}, the corresponding map into \(O(E)/G\) is Lipschitz. Since the quotient map \(O(E)\to O(E)/G\) admits smooth local sections \cite[Theorem~20.12, Corollary~21.6, and Theorems~21.10 and~4.26]{Lee2013}, after shrinking the neighbourhood we obtain a Lipschitz map \(u\mapsto Q_u\in O(E)\) such that \(Q_uS=\widehat S_u\). Hence \(A_u=J_uC_u^{1/2}Q_u\) depends Lipschitz-continuously on \(u\), has range \(H_u\), and maps \(S\) onto \(S_u\).
    
    Let us now choose a finite cover by connected chart domains whose closures lie in neighbourhoods on which the preceding construction is defined. On an overlap there is a unique \(t_{ji}(u)\in G\) such that \(A_j(u)=A_i(u)t_{ji}(u)\), and
    \[
        t_{ji}(u)=\bigl(A_i(u)^*A_i(u)\bigr)^{-1}A_i(u)^*A_j(u).
    \]
    The least singular value of \(A_i(u)\) is bounded away from zero on the chart closure. Matrix multiplication and inversion are Lipschitz on the resulting compact sets, and so every \(t_{ji}\) is Lipschitz on the whole overlap. Finally, use \((u,g)\mapsto A_i(u)g\) from \(U_i\times G\) onto \(\coprod_{u\in U_i}\cP_u\) as a local trivialisation. The transition identities make the induced topologies compatible and give the asserted principal bundle.
\end{proof}

Let \(G^\circ\) denote the identity component of \(G\). A global section need not exist. The obstruction can nevertheless be removed after passage to the identity component and pullback by a suitable self-map of the sphere.

\begin{lemma}\label{lem:annihilation}
    The bundle \(\cP\) contains a principal \(G^\circ\)-subbundle \(\cP^\circ\) with a finite Lipschitz atlas. Moreover, there are \(D\geqslant1\) and a smooth map \(\varphi\colon S^{N-1}\to S^{N-1}\) of degree \(D\) such that \(\varphi^*\cP^\circ\) is topologically trivial.
\end{lemma}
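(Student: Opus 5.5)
Two things have to be produced: a reduction of \(\cP\) to the identity component \(G^\circ\), and a pullback that kills the remaining bundle. I would follow the scheme of Lu and Yang.

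\textbf{Reduction to \(G^\circ\).} Since \(G\) is compact by \Cref{lem:finite-moments}, the component group \(\Gamma=G/G^\circ\) is finite. The associated bundle \(\cP/G^\circ=\cP\times_G\Gamma\to\Sigma\) is then a finite covering, with transition functions \(t_{ji}\) mod \(G^\circ\). Because \(\Sigma=S^{N-1}\) is simply connected (here \(N\geqslant4\)), this covering is trivial. Choosing one sheet therefore selects, over each chart \(U_i\) of the finite Lipschitz atlas from \Cref{prop:exact-bundle}, a coset \(g_iG^\circ\). The selection can be made so that the corrected transitions \(g_j^{-1}t_{ji}g_i\) take values in \(G^\circ\). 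The chart domains are connected, so these cosets are locally constant and the choice is consistent. Replacing \(A_i\) by \(A_ig_i\) preserves the Lipschitz property, since right multiplication by a fixed \(g_i\) is linear. This yields the subbundle \(\cP^\circ\) with a finite Lipschitz atlas.

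\textbf{Annihilation by pullback.} Principal \(G^\circ\)-bundles over \(S^{N-1}\) are classified by clutching functions. Concretely, they correspond to \(\pi_{N-2}(G^\circ)\), modulo the action of \(\pi_0\), which is trivial since \(G^\circ\) is connected. The clutching class \(c\in\pi_{N-2}(G^\circ)\) is obtained from the restriction to the equator of the comparison between trivialisations over the two hemispheres. The key input is that \(\pi_{N-2}(G^\circ)\) is a finitely generated abelian group whose rank is controlled by rational homotopy. Since \(G^\circ\) is a compact connected Lie group, it is rationally a product of odd spheres, so \(\pi_m(G^\circ)\otimes\mathbb Q=0\) for even \(m\). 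As \(N\) is even, \(N-2\) is even, and hence \(\pi_{N-2}(G^\circ)\) is finite, say of order \(D\). This is exactly where the parity hypothesis enters.

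Now take \(\varphi\colon S^{N-1}\to S^{N-1}\) to be a smooth map of degree \(D\), for instance the suspension of a degree-\(D\) map of \(S^{N-2}\), smoothed. Naturality of clutching gives that \(\varphi^*\cP^\circ\) has clutching class \(Dc=0\). To make this rigorous, I would choose \(\varphi\) to preserve the hemispheres and to restrict on the equator to a map \(\psi\) of degree \(D\). Then the pulled-back clutching function is \(c\circ\psi\), which represents \(Dc\) because precomposition with a degree-\(D\) self-map of a sphere acts as multiplication by \(D\) on \(\pi_{N-2}\). The class \(Dc\) vanishes, so \(\varphi^*\cP^\circ\) is topologically trivial.

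\textbf{Main obstacle.} The main obstacle is the homotopy-theoretic step: justifying the finiteness of \(\pi_{N-2}(G^\circ)\) and the identity \(\varphi^*\)-clutching \(=D\cdot\)clutching. For the former I would cite the classical fact that a compact connected Lie group has the rational homotopy type of a product of odd-dimensional spheres. For the latter I would use the clutching construction with equator-preserving suspension maps. A smaller issue is the degenerate case \(G^\circ\) trivial, but then \(D=1\) and the argument is immediate.
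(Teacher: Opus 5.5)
Your proposal is correct and follows essentially the paper's route: trivialise the finite covering $\cP/G^\circ$ over the simply connected sphere to reduce to $G^\circ$, use that $N-2$ is even to make $\pi_{N-2}(G^\circ)$ finite, and kill the class by pulling back along a degree-$D$ map. The differences are cosmetic: you use clutching functions and a hemisphere-preserving suspension where the paper uses $\pi_{N-1}(BG^\circ)$ and Lu--Yang's Lemma~2.5, and the rational homotopy of compact Lie groups where the paper uses the torus-times-semisimple cover plus Serre's theorem. One harmless slip: with the paper's convention $A_j=A_it_{ji}$, the corrected transition is $g_i^{-1}t_{ji}g_j$, not $g_j^{-1}t_{ji}g_i$.
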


\begin{proof}
    The group \(G/G^\circ\) is finite, so \(\cP/G^\circ\to S^{N-1}\) is a finite covering. Since \(N\geqslant4\), the sphere is simply connected, and every component of this covering maps homeomorphically onto the base. Choose one component and let \(\cP^\circ\) be its inverse image in \(\cP\). On each connected chart the chosen component is represented by a fixed right coset \(g_iG^\circ\). Replacing the local section \(A_i\) by \(A_i g_i\) changes the transition function to \(t_{ji}^\circ=g_i^{-1}t_{ji}g_j\in G^\circ\). It is still Lipschitz, and hence the reduction has a finite Lipschitz atlas.
    
    The reduced bundle determines a class \(\alpha\in\pi_{N-1}(BG^\circ)\cong\pi_{N-2}(G^\circ)\); see \cite[Chapter~1, Theorem~5.3, and Chapter~4, Sections~8 and~11--13]{Husemoller1994}. Since \(G^\circ\) is connected, \(BG^\circ\) is simply connected, so the based class represents the corresponding free homotopy class without a residual fundamental-group action. A compact connected Lie group is finitely covered by the product of a torus and a compact simply connected semisimple group \cite[Section~2.9]{BorelHirzebruch1958}. Covering maps induce isomorphisms on homotopy groups in degrees at least two, the higher homotopy groups of a torus vanish, and Serre's theorem shows that the even-dimensional homotopy groups of the semisimple factor are finite \cite[Chapter~V, Section~3, Corollary~2]{Serre1953}. Since \(N-2\) is positive and even, \(\alpha\) has finite order. Choose \(D\geqslant1\) with \(D\alpha=0\).
    
    There is a based continuous self-map \(\varphi_0\) of \(S^{N-1}\) of degree \(D\). Approximate it, as an \(\R^N\)-valued map, by a smooth map \(g\) with \(\abs{g-\varphi_0}<1/2\), and put \(\varphi_1=g/\abs g\). Normalised straight-line interpolation gives a homotopy from \(\varphi_0\) to \(\varphi_1\). Postcomposing with a suitable element of \(SO(N)\) makes the map based without changing its degree; call the resulting map \(\varphi\). Precomposition by \(\varphi\) acts as multiplication by \(D\) on \(\pi_{N-1}(BG^\circ)\) \cite[Lemma~2.5]{LuYang2026}. Hence \(\varphi^*\cP^\circ\) has class \(D\alpha=0\) and is topologically trivial.
\end{proof}

The degree calculation differentiates the exact family, so a continuous trivialisation is insufficient. We use the following Lipschitz form of the regularisation argument of Lu and Yang \cite[Lemmas~3.7--3.8 and Theorem~3.9]{LuYang2026}.

\begin{lemma}\label{lem:lipschitz-trivialisation}
    Let \(M\) be a compact smooth manifold, let \(L\subseteq O(q)\) be compact, and let \(P\to M\) be a topologically trivial principal \(L\)-bundle with a finite Lipschitz atlas. Then \(P\) has a global Lipschitz section.
\end{lemma}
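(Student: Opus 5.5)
The plan is to start from a continuous global section and smooth it while staying inside the bundle, using the Lipschitz atlas to control the correction. Since \(P\) is topologically trivial, there is a continuous global section \(\sigma\). In the finite Lipschitz atlas \(\{(U_i,s_i)\}\), write \(\sigma=s_i\,\gamma_i\) on \(U_i\), with \(\gamma_i\colon U_i\to L\) continuous and \(\gamma_j=t_{ji}^{-1}\gamma_i\) on overlaps, where \(s_i=s_jt_{ji}\) is the convention I fix. The aim is to replace the \(\gamma_i\) by Lipschitz maps \(\delta_i\colon U_i\to L\), defined on slightly shrunk closed sets covering \(M\), that still satisfy \(\delta_j=t_{ji}^{-1}\delta_i\). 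The section \(s_i\delta_i\) is then well defined and Lipschitz, because products of Lipschitz maps into the compact group \(L\subseteq O(q)\) are Lipschitz.

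To get the \(\delta_i\), I would first embed \(P\) Lipschitz-equivariantly into a vector bundle. Regard \(L\subseteq O(q)\subseteq\R^{q\times q}\), and form the associated bundle \(\cE=P\times_L\R^{q\times q}\), where \(L\) acts by left multiplication. This is a vector bundle with Lipschitz transition functions \(t_{ji}\), and it contains \(P\) as the subbundle whose fibre in each chart is \(L\). A section of \(\cE\) is locally a matrix-valued map \(v_i\) with \(v_j=t_{ji}^{-1}v_i\). Take a Lipschitz (indeed smooth) partition of unity \(\{\lambda_i\}\) subordinate to shrunk chart domains. On each chart, approximate \(\gamma_i\) uniformly by a Lipschitz matrix-valued map \(\eta_i\), for example by mollifying in coordinates or by Stone--Weierstrass. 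Then set
\[
v_j=\sum_i\lambda_i\,t_{ji}^{-1}\eta_i .
\]
This defines a global Lipschitz section of \(\cE\): it is consistent on overlaps because \(t_{kj}^{-1}t_{ji}^{-1}=t_{ki}^{-1}\) by the cocycle identity. It is also uniformly close to \(\gamma_j\), since \(t_{ji}^{-1}\gamma_i=\gamma_j\) and the \(t_{ji}\) are orthogonal, so they preserve the error exactly.

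The remaining step, and the main obstacle, is projecting this section back onto \(L\) equivariantly. I would use a tubular neighbourhood retraction \(r\colon W\to L\), where \(W\) is a neighbourhood of \(L\) in \(\R^{q\times q}\) on which \(r\) is smooth, hence Lipschitz after shrinking. I need \(r\) to commute with left multiplication by \(L\), so that \(\delta_j=r(v_j)\) satisfies \(\delta_j=t_{ji}^{-1}\delta_i\). This holds if \(L\) is a compact Lie group, which is the case in our application: \(L\) is closed in \(O(q)\), so it is a closed submanifold. Left multiplication by an orthogonal matrix is an isometry of the Frobenius metric, so the nearest-point projection onto \(L\) is left-\(L\)-equivariant and smooth on a uniform tube. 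Choosing the approximations \(\eta_i\) within the tube radius puts every \(v_j\) inside \(W\) pointwise. The sections \(s_j\,r(v_j)\) then glue to a global Lipschitz section of \(P\).

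If one insists on an arbitrary compact \(L\), I would first note that \(L\) is still a closed subgroup of \(O(q)\), hence a Lie group by Cartan's theorem, so the same argument applies. The only delicate checks are that the tube radius is uniform, which follows from compactness of \(L\), and that the Lipschitz constants survive the finitely many overlaps.
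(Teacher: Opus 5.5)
Your proposal is correct and follows essentially the same route as the paper: the associated bundle \(P\times_L\End(\R^q)\) under left multiplication, a partition-of-unity Lipschitz approximation of the continuous section whose error is controlled because the orthogonal transition operators preserve the fibre norm, and then an \(L\)-equivariant tubular retraction back onto the orbit subbundle. The only blemish is bookkeeping: with your stated convention \(s_i=s_jt_{ji}\) one gets \(\gamma_j=t_{ji}\gamma_i\) and \(t_{ki}=t_{kj}t_{ji}\), so your formulas (\(\gamma_j=t_{ji}^{-1}\gamma_i\) and \(t_{kj}^{-1}t_{ji}^{-1}=t_{ki}^{-1}\)) actually belong to the opposite convention \(s_j=s_it_{ji}\), which is harmless.
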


\begin{proof}
    Form the associated real vector bundle
    \[
         \cE=P\times_L\End(\R^q),
         \qquad (p\ell,T)\sim(p,\ell T),
    \]
    where \(L\) acts by left multiplication. The transition operators are Lipschitz in the base and orthogonal on the fibres. The map \(\iota\colon P\to\cE\), \(\iota(p)=[p, \Id]\), identifies \(P\) with the orbit subbundle \(P\times_L L\). Topological triviality gives a continuous section \(s_0\) of \(P\), and hence a continuous section \(\iota\circ s_0\) of \(\cE\).
    
    We record why this section admits a uniform Lipschitz approximation. Let \((U_i)_{i=1}^r\) be a finite trivialising cover and choose a smooth partition of unity \((\psi_i)_{i=1}^r\) with \(\operatorname{supp}\psi_i\Subset U_i\). Write \(\iota\circ s_0\) as a continuous map \(f_i\colon U_i\to\End(\R^q)\) in the \(i\)-th chart. By smooth approximation \cite[Theorem~6.21]{Lee2013}, choose a smooth \(g_i\) uniformly close to \(f_i\) on \(\operatorname{supp}\psi_i\). The local section represented by \(\psi_i g_i\) extends by zero to a global Lipschitz section: in another chart it is the product of a Lipschitz orthogonal transition operator and a compactly supported smooth map. Summing these sections gives a Lipschitz section \(\sigma\) of \(\cE\). Since the transition operators preserve the fibre norm,
    \[
         \norm{\sigma(x)-\iota(s_0(x))}
         \leqslant\sum_{i=1}^r\psi_i(x)\abs{g_i(x)-f_i(x)},
    \]
    so the approximation may be made uniformly as close as required.
    
    The compact subgroup \(L\) is an embedded submanifold of \(\End(\R^q)\). Left multiplication is orthogonal and carries normal spaces to normal spaces, so the tubular-neighbourhood construction gives an \(L\)-invariant neighbourhood \(\mathcal U\) of \(L\) and a smooth retraction \(r\colon\mathcal U\to L\) satisfying \(r(\ell T)=\ell r(T)\); see \cite[Theorem~6.24]{Lee2013}. Choose a smaller invariant tube \(\mathcal U'\) whose closure is compactly contained in \(\mathcal U\).
    
    We verify that the restriction of \(r\) is Lipschitz. Cover \(\overline{\mathcal U'}\) by finitely many convex open sets \(W_1,\ldots,W_s\) whose closures lie in \(\mathcal U\), and let
    \[
         M=\max_{1\leqslant j\leqslant s}\sup_{T\in W_j}\norm{Dr(T)}<\infty.
    \]
    Choose a Lebesgue number \(\eta>0\) for this cover of \(\overline{\mathcal U'}\). If \(T,T'\in\overline{\mathcal U'}\) and \(\abs{T-T'}<\eta\), then the two-point set \(\{T,T'\}\) lies in some \(W_j\); convexity places the segment between them in \(W_j\), and the mean-value estimate gives \(\abs{r(T)-r(T')}\leqslant M\abs{T-T'}\). If \(\abs{T-T'}\geqslant\eta\), then
    \[
         \abs{r(T)-r(T')}
         \leqslant\operatorname{diam}(L)
         \leqslant\frac{\operatorname{diam}(L)}{\eta}\abs{T-T'}.
    \]
    Thus \(r\) is Lipschitz on \(\overline{\mathcal U'}\), with constant at most \(\max\{M,\operatorname{diam}(L)/\eta\}\).
    
    Choose \(\sigma\) so close to \(\iota\circ s_0\) that all its fibre coordinates belong to \(\mathcal U'\). Equivariance makes \([p,T]\mapsto[p,r(T)]\) a well-defined fibrewise Lipschitz retraction onto \(\iota(P)\). Composing \(\sigma\) with this retraction and with \(\iota^{-1}\) gives a global Lipschitz section of \(P\).
\end{proof}

\begin{proposition}\label{prop:global-maps}
    There are a smooth map \(\varphi\colon S^{N-1}\to S^{N-1}\) of positive degree and a Lipschitz map \(z\mapsto A_z\) from \(S^{N-1}\) to \(\Hom_{\R}(E,V)\) such that
    \begin{equation}\label{eq:global-exact}
     A_z(E)=H_{\varphi(z)}
     \quad\hbox{and}\quad
     A_z(S)=K\cap H_{\varphi(z)}
    \end{equation}
    for every \(z\in S^{N-1}\).
\end{proposition}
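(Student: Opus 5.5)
The proof combines the three preceding results: \Cref{prop:exact-bundle}, \Cref{lem:annihilation} and \Cref{lem:lipschitz-trivialisation}. First, \Cref{prop:exact-bundle} gives the principal \(G\)-bundle \(\cP\to\Sigma\) with a finite Lipschitz atlas. By \Cref{lem:annihilation}, this bundle contains the reduction \(\cP^\circ\) with structure group \(G^\circ\), and that lemma also supplies a smooth \(\varphi\colon S^{N-1}\to S^{N-1}\) of degree \(D\geqslant1\) with \(\varphi^*\cP^\circ\) topologically trivial. We identify \(S^{N-1}\) with \(\Sigma=S(V)\) through the chosen orientation.

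The next step is to check that the pullback \(\varphi^*\cP^\circ\) satisfies the hypotheses of \Cref{lem:lipschitz-trivialisation}. Its structure group is \(L=G^\circ\). This is compact by \Cref{lem:finite-moments}, and it lies in \(O(E)\cong O(q)\) with \(q=\dim E=d\). For the atlas, let \((U_i,A_i)\) be the finite Lipschitz atlas of \(\cP^\circ\), with transition functions \(t^\circ_{ji}\). Then the sets \(\varphi^{-1}(U_i)\) cover \(S^{N-1}\), the local sections \(z\mapsto A_i(\varphi(z))\) trivialise the pullback, and the transition functions are \(t^\circ_{ji}\circ\varphi\). Since \(\varphi\) is smooth on a compact manifold it is Lipschitz, so these compositions are Lipschitz. \Cref{lem:lipschitz-trivialisation} therefore yields a global Lipschitz section \(s\) of \(\varphi^*\cP^\circ\).

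Writing \(s(z)=(z,A_z)\) with \(A_z\in\cP^\circ_{\varphi(z)}\subseteq\cP_{\varphi(z)}\) gives \eqref{eq:global-exact} directly from the definition of \(\cP_u\). It remains to see that \(z\mapsto A_z\in\Hom_{\R}(E,V)\) is Lipschitz. On each \(\varphi^{-1}(U_i)\) we can write \(A_z=A_i(\varphi(z))\,g_i(z)\), where \(g_i\) is the Lipschitz chart coordinate of \(s\) in \(G^\circ\subseteq O(E)\). A product of bounded Lipschitz maps is Lipschitz, so \(A_z\) is locally Lipschitz. To pass from locally to globally Lipschitz on \(S^{N-1}\), take a Lebesgue number for the finite cover. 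For pairs of points closer than this number we use the local constants. For pairs farther apart we bound \(\norm{A_z-A_w}\) by \(2\sup_z\norm{A_z}\), which is finite because the atlas maps are bounded on the chart closures.

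The main obstacle is bookkeeping rather than a new idea. The Lipschitz structure must survive the pullback, and the phrase ``Lipschitz section'' in \Cref{lem:lipschitz-trivialisation} must be read in the chart coordinates inherited from the atlas. That is why the local sections \(A_i\) should be taken on neighbourhoods whose closures lie inside the domains of \Cref{prop:exact-bundle}, so that all the bounds used above are uniform.
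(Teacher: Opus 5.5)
Your proposal is correct and follows the paper's proof essentially step by step. You pull back \(\cP^\circ\) along the degree-\(D\) map of \Cref{lem:annihilation}, note that the pulled-back transition functions \(t^\circ_{ji}\circ\varphi\) are Lipschitz, apply \Cref{lem:lipschitz-trivialisation}, and pass from the local form \(A_z=A_i(\varphi(z))g_i(z)\) to a global Lipschitz bound via a Lebesgue number; the extra bookkeeping you supply (compactness of \(G^\circ\subseteq O(E)\), and the bound \(2\sup_z\norm{A_z}\) for far-apart pairs) only makes explicit what the paper leaves implicit.
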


\begin{proof}
    Pull \(\cP^\circ\) back by the map supplied by \Cref{lem:annihilation}. The pulled-back transition functions are Lipschitz because \(\varphi\) is smooth on a compact manifold, and the bundle is topologically trivial. By \Cref{lem:lipschitz-trivialisation}, it has a global Lipschitz section. In a pulled-back chart, this section is represented by \(A_z=A_i^\circ(\varphi(z))h_i(z)\), where \(A_i^\circ\) is a local exact section and \(h_i\) is a Lipschitz \(G^\circ\)-valued map. Hence \(z\mapsto A_z\) is locally Lipschitz. A finite-cover and Lebesgue-number argument makes it globally Lipschitz. Its values lie in the pullback of the exact-section bundle, and so \eqref{eq:global-exact} holds.
\end{proof}

\begin{proof}[Proof of \Cref{thm:spherical-intro}]
    For the family in \Cref{prop:global-maps}, exactness gives \(p_K(A_zy)=p_S(y)\), while \(A_zy\in H_{\varphi(z)}\subseteq\varphi(z)^\perp\) gives \((A_zy,\varphi(z))=0\). The real ambient dimension \(N\) is even, and \Cref{thm:exact-family-intro} therefore shows that \(p_S^2\) is a positive-definite quadratic form. A self-map of a sphere of nonzero degree is surjective. Thus every \(u\in S(V)\) is \(\varphi(z)\) for some \(z\), and \eqref{eq:global-exact} shows that \(K\cap H_u\) is linearly equivalent to the ellipsoid \(S\).
    
    Assume now the two-vector coverage property. Given \(x,y\in V\), choose \(u\) with \(x,y\in H_u\). The restriction of \(p_K\) to \(H_u\) is Minkowski's functional of the ellipsoid \(K\cap H_u\), and therefore
    \[
         p_K(x+y)^2+p_K(x-y)^2
         =2p_K(x)^2+2p_K(y)^2.
    \]
    In particular, the identity holds for every pair \(x,y\). Each vector also belongs to one of the sections, so \(p_K(-x)=p_K(x)\). The Jordan--von Neumann polarisation calculation, applied to \(q=p_K^2\), now shows that \(B(x,y)=\tfrac14(q(x+y)-q(x-y))\) is a symmetric real bilinear form and that \(B(x,x)=q(x)\) \cite{JordanVonNeumann1935}. It is positive definite because \(p_K\) is definite. Hence \(K=\{x:B(x,x)\leqslant1\}\) is an ellipsoid.
\end{proof}

The parity assumption on \(N\) is intrinsic to this argument. If \(N\) were odd and a nonzero-degree exact family existed for Minkowski's functional \(p\), the case \(k=0\) of \eqref{eq:signed-degree} would make \(p^N\) an odd-degree homogeneous polynomial which is positive away from zero. An odd-degree homogeneous polynomial changes sign under \(y\mapsto-y\), a contradiction.

\section{Complex and quaternionic applications}\label{sec:scalar-rigidity}

Before specialising the real theorem, let us record the relevant dimension count. Put \(\delta=\dim_{\R}\F\), temporarily allowing \(\F=\R\), and set \(m=\delta(n+1)\). For hyperplanes in an \((n+1)\)-dimensional \(\F\)-space, the unit-normal sphere is \(S^{m-1}\), the obstruction group in \Cref{lem:annihilation} is \(\pi_{m-2}(G^\circ)\), and \Cref{thm:exact-family-intro} produces the powers \(p_S^{m+2k}\). It is the homotopy-group index \(m-2\), not the degree of the self-map \(\varphi\), which must be positive and even; the degree need only be nonzero and is chosen positive in \Cref{lem:annihilation}. The real ambient dimension \(m\) must also be even. Over \(\R\), these requirements say that \(n\) is odd. Over \(\C\) and \(\Hh\), they hold for every \(n\geqslant2\): here \(m-2\) equals \(2n\) and \(4n+2\), respectively.

The quadratic form obtained from the real theorem must still be shown to respect the scalar structure. The following polarisation formula uses our right-module convention.

\begin{lemma}\label{lem:scalar-polarisation}
Let \(E\) be a right \(\F\)-vector space, and let \(q\) be a positive-definite real quadratic form on \(E_{\R}\) such that \(q(xa)=q(x)\abs a^2\) for \(x\in E\) and \(a\in\F\). Then there is a unique positive-definite \(\F\)-Hermitian form \(h\) with \(h(x,x)=q(x)\).
\end{lemma}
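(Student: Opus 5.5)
The plan is to write \(h\) down explicitly by polarisation and then verify its properties. Let \(B(x,y)=\tfrac14\bigl(q(x+y)-q(x-y)\bigr)\) be the symmetric real bilinear form attached to \(q\), so that \(B(x,x)=q(x)\). The hypothesis \(q(xa)=q(x)\abs a^2\) with \(\abs a=1\) says that right multiplication by any unit scalar is \(q\)-orthogonal. Polarising \(q(xa)=q(x)\abs a^2\) then gives \(B(xa,ya)=\abs a^2B(x,y)\) for all \(a\in\F\).

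For the case \(\F=\C\), put \(I=\) right multiplication by \(i\), so that \(B(xi,yi)=B(x,y)\) and hence \(B(xi,y)=-B(x,yi)\). I would then define
\[
h(x,y)=B(x,y)+B(xi,y)\,i,
\]
choosing the sign so that \(h\) is conjugate-linear in the first variable and right-linear in the second. The checks are short: \(h(x,yi)=h(x,y)i\) by the skew-symmetry of \(I\), \(h(y,x)=\overline{h(x,y)}\), and \(h(x,x)=q(x)+B(xi,x)i=q(x)\), since \(B(xi,x)=0\) by skewness.

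For the case \(\F=\Hh\), let \(I,J,K\) be right multiplication by \(i,j,k\). These are \(B\)-orthogonal and square to \(-\Id\), so they are \(B\)-skew. Note that right multiplication reverses products, so \(xij=(xi)j\); this is harmless for the checks below. I would define
\[
h(x,y)=B(x,y)+B(xi,y)\,i+B(xj,y)\,j+B(xk,y)\,k,
\]
with signs adjusted to match the conventions of \cite{AlpayColomboSabadini2015}. To verify right-linearity in \(y\) it suffices to check \(h(x,yi)=h(x,y)i\) and \(h(x,yj)=h(x,y)j\), since \(i\) and \(j\) generate. This is a bookkeeping computation using \(B(xa,yb)\) relations of the form \(B(xi,yj)=-B(x\,i\bar{j},y)\) (or its variant), which all follow from the orthogonality of right multiplication by the unit quaternions \(i,j,k\) and \((i\pm j)/\sqrt2\). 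Hermitian symmetry follows from the symmetry of \(B\) together with the skewness of \(I,J,K\). The identity \(h(x,x)=q(x)\) again uses \(B(xu,x)=0\) for \(u\in\{i,j,k\}\).

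Positive-definiteness is then immediate from \(h(x,x)=q(x)\). For uniqueness, I would recover any \(\F\)-Hermitian \(h'\) with \(h'(x,x)=q(x)\) from its real part. The real part \(\operatorname{Re}h'\) is symmetric and real-bilinear with \(\operatorname{Re}h'(x,x)=q(x)\), so \(\operatorname{Re}h'=B\). The remaining components are forced by \(\operatorname{Re}h'(xu,y)\) for \(u=i,j,k\), since \(h'(xu,y)=\bar u\,h'(x,y)\). Hence \(h'=h\).

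The main obstacle is the quaternionic sign bookkeeping, namely checking right-linearity when the imaginary units do not commute. I would organise it by first showing that the map \(a\mapsto\operatorname{Re}\bigl(\bar a\,h(x,y)\bigr)=B(xa,y)\) is real-linear in \(a\), and then defining \(h\) as the unique quaternion with these real parts. In that formulation, linearity in \(y\) reduces to \(B(xa,yb)=B(x\,a\bar b,y)\) for \(\abs b=1\). This identity is precisely the orthogonality of right multiplication by \(b\) applied to the pair \(xa\bar b,\ y\).
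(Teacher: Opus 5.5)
Your proposal is correct and is essentially the paper's proof. You polarise \(q\), use that right multiplication by a unit scalar \(b\) is \(B\)-orthogonal with adjoint right multiplication by \(\bar b\), take exactly the paper's formula for \(h\) (no sign adjustment is actually needed), and get uniqueness from \(\operatorname{Re}h'=B\) together with \(\operatorname{Re}h'(xu,y)\). One small slip: the relation should read \(B(xi,yj)=B(x\,i\bar j,y)=-B(xk,y)\), without your extra minus sign, which is what your own final identity \(B(xa,yb)=B(x\,a\bar b,y)\) gives.
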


\begin{proof}
Let \(B\) be the symmetric real bilinear form which polarises \(q\). If \(\abs a=1\), then \(q(xa)=q(x)\), and polarisation gives \(B(xa,ya)=B(x,y)\). Thus right multiplication by \(a\) is orthogonal and has adjoint right multiplication by \(\bar a\). In particular, for each standard imaginary unit \(e\),
\(B(xe,y)=-B(x,ye)\).

Define
\[
 h(x,y)=
 \begin{cases}
 B(x,y)+B(xi,y)i, & \F=\C,\\
 B(x,y)+B(xi,y)i+B(xj,y)j+B(xk,y)k, & \F=\Hh.
 \end{cases}
\]
We verify the quaternionic case, the complex one being the same calculation with only \(i\). Write \(b_0=B(x,y)\), \(b_1=B(xi,y)\), \(b_2=B(xj,y)\), and \(b_3=B(xk,y)\). Since \(B(u,va)=B(u\bar a,v)\) for \(\abs a=1\), the multiplication rules give
\[
 h(x,yi)=-b_1+b_0i+b_3j-b_2k=h(x,y)i.
\]
The calculations for \(j\) and \(k\) are obtained by cyclic permutation. Real linearity then gives \(h(x,ya)=h(x,y)a\) for every quaternion \(a\). Symmetry of \(B\) and the skew-adjoint identity give \(B(ye,x)=-B(xe,y)\), so \(h(y,x)=\overline{h(x,y)}\). Right linearity in the second variable and Hermitian symmetry imply conjugate linearity in the first. Also \(B(xe,x)=0\), and hence \(h(x,x)=B(x,x)=q(x)>0\) for \(x\neq0\).

Finally, any Hermitian form with diagonal \(q\) has real part \(B\). Its coefficients along the standard imaginary units are recovered from the numbers \(B(xe,y)\), exactly as in the displayed definition of \(h\). This proves uniqueness. The formula is also a special case of \cite[Theorems~1.1 and~1.3]{BenderChakrabarti2023}.
\end{proof}

\begin{proof}[Proof of \Cref{thm:hyperplane-intro}]
Orient \(V_{\R}\) and choose an auxiliary \(\F\)-Hermitian inner product. For a unit vector \(u\in V_{\R}\), put
\[
 H_u=\{v\in V:\inner{u}{v}_{\F}=0\},
 \qquad
 \Pi_uv=v-u\inner{u}{v}_{\F}.
\]
The formula for \(\Pi_u\) shows that \(u\mapsto H_u\) is smooth. Each \(H_u\) has real dimension \(\delta n\), where \(\delta=\dim_{\R}\F\), and is contained in the real orthogonal complement of \(u\). The ambient real dimension \(N=\delta(n+1)\) is even. If \(x,y\in V\), their \(\F\)-linear span has dimension at most two. Its \(\F\)-orthogonal complement is nonzero because \(n\geqslant2\); choosing a unit vector \(u\) in this complement gives \(x,y\in H_u\). Thus \Cref{thm:spherical-intro} applies and shows that \(K\) is a real ellipsoid.

Let \(q=p_K^2\) be the resulting positive-definite real quadratic form. Since \(K\) is \(\F\)-circled, \(p_K(xa)=p_K(x)\) for \(\abs a=1\), and positive homogeneity gives \(q(xa)=q(x)\abs a^2\) for arbitrary \(a\in\F\). By \Cref{lem:scalar-polarisation}, \(q\) is the diagonal of a positive-definite \(\F\)-Hermitian form.
\end{proof}

\begin{proof}[Proof of \Cref{thm:functional-intro}]
Let \(x,y\in X\), and choose an \((n+1)\)-dimensional \(\F\)-subspace \(Y\) containing their \(\F\)-linear span. On an auxiliary Euclidean unit sphere in \(Y\), the definite function \(p\) is positive, continuous, and hence bounded away from zero. Thus \(K_Y=\{v\in Y:p(v)\leqslant1\}\) is an \(\F\)-circled star body, with positive Lipschitz radial function \(1/p\). Every \(\F\)-hyperplane of \(Y\) is an \(n\)-dimensional subspace of \(X\), and the maps in hypothesis~\emph{(ii)} identify the corresponding sections of \(K_Y\). By \Cref{thm:hyperplane-intro}, \(p|_Y\) is induced by an \(\F\)-Hermitian form. In particular, the parallelogram identity holds for \(x\) and \(y\).

Since \(x,y\) were arbitrary, \(q=p^2\) satisfies the parallelogram identity on \(X\). Absolute homogeneity makes \(q\) even and two-homogeneous, and \(q\geqslant0\). No further appeal to hypothesis~\emph{(i)} is needed. Indeed, the algebraic Jordan--von Neumann calculation makes
\[
 B(x,y)=\tfrac14\bigl(q(x+y)-q(x-y)\bigr)
\]
symmetric and additive in each variable. Applying non-negativity to \(q(u+tv)\) first for rational \(t\) gives \(\abs{B(u,v)}^2\leqslant q(u)q(v)\). Consequently, for fixed \(x,y\), the additive function \(t\mapsto B(tx,y)\) is bounded in absolute value by \(\abs t\,q(x)^{1/2}q(y)^{1/2}\); it is continuous and therefore real-linear. Thus \(B\) is a positive-definite real bilinear form with diagonal \(q\). Finally, \(q(xa)=q(x)\abs a^2\), and \Cref{lem:scalar-polarisation} gives the required \(\F\)-Hermitian form.
\end{proof}

\begin{proof}[Proof of \Cref{cor:normed-intro}]
A norm is Lipschitz on each finite-dimensional subspace with respect to any auxiliary Euclidean norm. If \(f\colon E\to F\) is one of the bijective metric isometries, the Mazur--Ulam theorem \cite{MazurUlam1932} shows that \(x\mapsto f(x)-f(0)\) is a real-linear isometry from \(E\) onto \(F\). Thus \Cref{thm:functional-intro} applies. Completion of the resulting Hermitian norm gives an \(\F\)-Hilbert space and changes nothing when \(X\) was complete.
\end{proof}

\section{Extensions to Fr\'echet spaces and beyond}\label{sec:frechet}

An isometric homogeneity condition on a Fr\'echet space requires more than its locally convex topology: one must fix either a defining sequence of seminorms or a compatible translation-invariant metric. Linear homeomorphism of finite-dimensional subspaces alone carries no information, since every finite-dimensional Hausdorff topological vector space over \(\F\) has its usual Euclidean topology. Completeness enters only after the inner-product structures have been found.

For an \(\F\)-seminorm \(p\), the associated normed quotient is \(X/\ker p\), with \(\norm{x+\ker p}_p=p(x)\). It need not be complete.

\begin{proof}[Proof of \Cref{thm:reduced-frechet}]
Assume (2), and let \((p_k)_{k\in\N}\) be a reduced \(n\)-isometric defining sequence. Put \(N_k=\ker p_k\) and \(d_k=\dim_{\F}(X/N_k)\). Since the sequence is increasing, \(N_{k+1}\subseteq N_k\), and the dimensions \(d_k\) are non-decreasing. We claim that \(d_k>n\) eventually. If not, every \(d_k\) is a finite integer at most \(n\), and the sequence eventually has a constant value. Nested subspaces of the same finite codimension are equal, so the kernels would then stabilise. Since the grading is Hausdorff, \(\bigcap_kN_k=\{0\}\); the stable kernel would be zero, giving \(\dim_{\F}X\leqslant n\), contrary to the hypothesis.

For every \(k\) in this cofinal tail, all \(n\)-dimensional subspaces of \(X/N_k\) are isometric. By \Cref{cor:normed-intro}, the quotient norm is induced by an \(\F\)-Hermitian form, and hence \(p_k\) is Hilbertian. Removing finitely many terms does not alter the topology. Let \(H_k=\widehat{X/N_k}\). Since \(p_k\leqslant p_{k+1}\), the canonical quotient map extends to a contraction \(\pi_{k+1,k}\colon H_{k+1}\to H_k\) with dense range. There is a canonical linear map \(J\colon X\to\varprojlim_{k\geqslant k_0}H_k\). It is injective because \(\bigcap_kN_k=\{0\}\), and the projective-limit seminorms pulled back by \(J\) are precisely the \(p_k\). Thus \(J\) is a topological embedding.

Let \(\xi=(\xi_k)_{k\geqslant k_0}\) belong to the inverse limit. For each \(k\geqslant k_0\), density of \(X/N_k\) in \(H_k\) gives \(x_k\in X\) such that \(\norm{x_k+N_k-\xi_k}_{H_k}<2^{-k}\). If \(j\leqslant k\), compatibility of \(\xi\) and contractivity of the bonding maps give \(\norm{x_k+N_j-\xi_j}_{H_j}<2^{-k}\). Hence, for fixed \(j\) and \(k,\ell\geqslant j\),
\[
 p_j(x_k-x_\ell)<2^{-k}+2^{-\ell}.
\]
The sequence \((x_k)\) is therefore Cauchy for every defining seminorm. Fr\'echet completeness gives an \(x\in X\) with \(x_k\to x\), and passage to the \(j\)-th coordinate yields \(x+N_j=\xi_j\). Thus \(Jx=\xi\), and \(J\) is onto.

Conversely, let \((p_k)\) be an increasing Hilbertian defining sequence. Each quotient \(X/\ker p_k\) is an inner-product space, although it need not be complete. Any two of its finite-dimensional subspaces of the same dimension are isometric. Hence the grading is reduced \(n\)-isometric whenever the quotient dimension exceeds \(n\).
\end{proof}

The conclusion need not be normable. Let \(s_{\F}\) be the space of rapidly decreasing \(\F\)-valued sequences, graded by the Hilbertian seminorms
\[
 p_m(x)^2=\sum_{j=1}^{\infty}j^{2m}\abs{x_j}^2.
\]
Its completion is the corresponding weighted \(\ell_2\)-space, so the grading is reduced \(n\)-isometric for every fixed \(n\). However, \(p_{m+1}(e_j)=j p_m(e_j)\). No single \(p_m\) induces the Fr\'echet topology. Thus the conclusion cannot in general be replaced by normability; once eventual Hilbertianity is known, the displayed inverse limit is the standard projective-limit representation.

\begin{proof}[Proof of \Cref{thm:simultaneous-collapse}]
Let us fix \(p\in\mathcal Q\). We first show that \(p\) is either zero or a norm. Suppose, to the contrary, that \(K=\ker p\) is nonzero and proper. Choose an \((n+1)\)-dimensional subspace \(Y\) which meets \(K\) nontrivially and is not contained in \(K\), and put \(L=Y\cap K\). Then \(1\leqslant r=\dim_{\F}L\leqslant n\). There is a hyperplane \(E\) of \(Y\) containing \(L\), and there is a hyperplane \(F\) of \(Y\) for which \(F\cap L\) is a hyperplane of \(L\). Thus \(\dim(E\cap K)=r\) and \(\dim(F\cap K)=r-1\). On the other hand, a \(p\)-similarity \(T\colon E\to F\) maps \(E\cap K\) bijectively onto \(F\cap K\), since \(p(Tx)=c_p(T)p(x)\) with \(c_p(T)>0\). This contradiction proves the claim.

For a nonzero \(p\), normalise each simultaneous similarity by replacing \(T\) with \(c_p(T)^{-1}T\). The resulting map is a \(p\)-isometry and remains a similarity for every member of \(\mathcal Q\). Hence all \(n\)-dimensional subspaces of \((X,p)\) are isometric. If \(\F\in\{\C,\Hh\}\), \Cref{cor:normed-intro} shows that \(p\) is induced by a positive-definite \(\F\)-Hermitian form. If \(\F=\R\), apply Gromov's theorem when \(n\) is even and the Lu--Yang theorem when \(n\) is odd to every \((n+1)\)-dimensional subspace \cite{Gromov1967,LuYang2026}; the resulting parallelogram identities again show that \(p\) is Hilbertian.

Let \(p,q\in\mathcal Q\) be nonzero, and restrict them to an arbitrary \((n+1)\)-dimensional subspace \(Y\). Relative to the Hermitian inner product inducing \(p\), there is a positive self-adjoint \(\F\)-linear operator \(R\) such that \(q(x)^2=\inner{x}{Rx}_p\). The spectral theorem over \(\R\), \(\C\), and \(\Hh\) gives a \(p\)-orthonormal eigenbasis \(e_1,\ldots,e_{n+1}\), with eigenvalues \(0<\lambda_1\leqslant\cdots\leqslant\lambda_{n+1}\). Let \(E_i\) be the hyperplane spanned by all basis vectors except \(e_i\).

Let \(U\colon E_i\to E_j\) be a simultaneous similarity, normalised to be a \(p\)-isometry, and write \(q(Ux)=c q(x)\). Then \(U^*(R|_{E_j})U=c^2R|_{E_i}\). Thus the eigenvalue multiset of \(R|_{E_j}\) is proportional to that of \(R|_{E_i}\). It follows that all one-point deletion multisets of \(\{\lambda_1,\ldots,\lambda_{n+1}\}\) are proportional.

Compare first the deletion of \(\lambda_1\) with the deletion of \(\lambda_{n+1}\). Since both remaining lists are ordered, there is \(c\geqslant1\) such that \(\lambda_{r+1}=c\lambda_r\) for \(1\leqslant r\leqslant n\). If \(c>1\), the deletion lists for \(\lambda_1\) and \(\lambda_2\) have condition numbers \(c^{n-1}\) and \(c^n\), respectively. Proportional positive multisets have the same condition number, so this is impossible. Hence \(c=1\), all the eigenvalues are equal, and \(q|_Y=a_Yp|_Y\) for some \(a_Y>0\).

Let us fix \(x_0\neq0\). For any \(y\in X\), choose an \((n+1)\)-dimensional subspace \(Y\) containing \(x_0\) and \(y\). The preceding paragraph gives \(a_Y=q(x_0)/p(x_0)\), independently of \(Y\), and consequently \(q=ap\) on \(X\). Thus all nonzero members of \(\mathcal Q\) are proportional. Since \(\mathcal Q\) is separating, it contains a nonzero member, and the topology generated by \(\mathcal Q\) is precisely its norm topology. If this topology is complete, that Hermitian norm is complete.
\end{proof}

A compatible translation-invariant metric determines the family of its centred balls, and every linear \(d\)-isometry preserves each ball together with Minkowski's functional associated with it.

\begin{proof}[Proof of \Cref{thm:metric-frechet}]
Let us fix \(x_0\neq0\) and replace \(\mathcal R\) by \(\{r\in\mathcal R:r<d(x_0,0)\}\). It still has zero in its closure, and every corresponding ball is proper. Each \(B_r^d\) is closed and contains the open ball of radius \(r\), so it is a neighbourhood of zero and hence is absorbing. If \(T\colon E\to F\) is one of the linear \(d\)-isometries, then \(T0=0\) and
\[
 T(B_r^d\cap E)=B_r^d\cap F
 \qquad(r\in\mathcal R).
\]
Thus the same map preserves Minkowski's functional associated with every ball considered below.

Suppose first that the balls are \(\F\)-absolutely convex. For \(r\in\mathcal R\), put \(p_r(x)=\inf\{t>0:x\in tB_r^d\}\). Since \(B_r^d\) is closed, absolutely convex, and absorbing, \(p_r\) is a continuous \(\F\)-seminorm and \(B_r^d=\{x:p_r(x)\leqslant1\}\). The family \(\{p_r:r\in\mathcal R\}\) is separating: if \(x\neq0\), choose \(r<d(x,0)\), so that \(x\notin B_r^d\). It generates the original topology because the balls with \(r\in\mathcal R\) form a neighbourhood basis at zero. Every linear \(d\)-isometry preserves every \(p_r\). The case \(c_p(T)=1\) of \Cref{thm:simultaneous-collapse} shows that the nonzero \(p_r\)'s are positive multiples of a single Hermitian norm \(q\). Consequently, \(q\) induces the original Fr\'echet topology and is complete.

Suppose next that the finite-dimensional sections are Lipschitz star bodies, and define \(p_r\) by the same infimum. If \(W\) is finite-dimensional, then \(p_r|_W\) is Minkowski's functional of \(B_r^d\cap W\). Taking \(W=\operatorname{span}_{\F}\{x\}\) shows that \(p_r(x)\) is finite and positive for \(x\neq0\), and the circled radial structure gives \(p_r(xa)=p_r(x)\abs a\). Closedness gives \(B_r^d=\{x:p_r(x)\leqslant1\}\). The radial regularity says precisely that \(p_r\) is Lipschitz on an auxiliary Euclidean sphere in every finite-dimensional subspace. Since every linear \(d\)-isometry preserves \(p_r\), \Cref{thm:functional-intro} makes each \(p_r\) a Hermitian norm when \(\F\in\{\C,\Hh\}\). When \(\F=\R\), the additional assumption that \(n\) is odd allows the same conclusion by applying \Cref{thm:spherical-intro} on every \((n+1)\)-dimensional subspace and then using the parallelogram identity. The family is separating as above, and the field-independent \Cref{thm:simultaneous-collapse} shows that its members are proportional. Their unit balls form a neighbourhood basis, so the resulting Hermitian norm induces the original topology and is complete.

Finally, suppose that every closed centred \(d\)-ball is \(\F\)-absolutely convex. Apply the first part to Minkowski's functional of each proper ball. There is a fixed Hermitian norm \(q\) for which every proper \(B_r^d\) is a scalar multiple of the \(q\)-unit ball; nonproper balls are all of \(X\). Vectors with the same \(q\)-norm therefore belong to exactly the same closed \(d\)-balls. Since \(d(x,0)=\inf\{r\geqslant0:x\in B_r^d\}\), they have the same distance from zero. Hence \(d(x,y)=\phi(q(x-y))\) for a non-decreasing function \(\phi\) which vanishes only at zero. If \(e\) is a \(q\)-unit vector, translation invariance and the triangle inequality give
\[
 \phi(s+t)=d((s+t)e,0)
 \leqslant d(se,0)+d(te,0)
 =\phi(s)+\phi(t),
\]
so \(\phi\) is subadditive.
\end{proof}

The Fr\'echet hypothesis is used only for completeness. For a Hausdorff metrisable locally convex space satisfying the other assumptions, the same argument produces a Hermitian norm which induces the topology, and the completion is Hilbert. The real version is included in \Cref{thm:metric-frechet}: absolutely convex balls give the conclusion for every \(n\geqslant2\), using the real Banach theorem at each level, whereas the star-body alternative is asserted for odd \(n\), the parity range of \Cref{thm:spherical-intro}.

The convex-ball hypothesis occurs for a standard class of compatible metrics. Let \((q_j)_{j\in\N}\) be a separating increasing sequence of \(\F\)-seminorms defining a Fr\'echet topology, and put
\[
 d_0(x,y)=\sup_{j\geqslant1}2^{-j}
 \frac{q_j(x-y)}{1+q_j(x-y)}.
\]
The function \(t\mapsto t/(1+t)\) is increasing and subadditive, so \(d_0\) is a translation-invariant metric. It induces the original topology: the finitely many terms with small index control any prescribed finite collection of seminorms, while the remaining terms are uniformly bounded by \(2^{-j}\). The same finite-head and uniform-tail argument shows completeness: a \(d_0\)-Cauchy sequence is Cauchy for each \(q_j\), hence converges in the Fr\'echet space, and then converges for \(d_0\). Moreover,
\[
 B_r^{d_0}
 =\bigcap_{2^{-j}>r}
 \left\{x:q_j(x)\leqslant\frac{r}{2^{-j}-r}\right\},
\]
so every closed ball is \(\F\)-absolutely convex, and any linear map preserving every \(q_j\) preserves \(d_0\). The theorem nevertheless concerns the chosen metric: invariance of the Fr\'echet topology alone does not imply invariance of its centred balls. \\

\noindent\textbf{Acknowledgements.} The first-named author acknowledges funding from the EPSRC (grant number EP/W524438/1) that has supported his studies. The second-named author acknowledges with thanks support of the Czech Academy of Sciences in Prague (RVO: 67985840). \\

\noindent\textbf{AI usage statement.} OpenAI's ChatGPT 5.6 Sol was used during the development of this work. After reading the recent article of Lu and Yang \cite{LuYang2026}, the authors observed that the techniques developed there could be extended to substantially more general settings. ChatGPT 5.6 Sol assisted with certain technical details needed to carry out these extensions.

\end{document}